\let\csname equation*\endcsname\relax
\let\csname endequation*\endcsname\relax
\numberwithin{equation}{section}
\numberwithin{figure}{section}
\newcommand\tabcaption{\def\@captype{table}\caption}
\newtheorem{thm}{Theorem}[section]
\newtheorem{lem}[thm]{Lemma}
\newtheorem{defn}[thm]{Definition}
\newtheorem{rem}[thm]{Remark}
\theoremstyle{remark}
\newcommand{\tr}{\mathrm{trace}}
\DeclareFontFamily{OMX}{MnSymbolE}{}
\DeclareSymbolFont{MnLargeSymbols}{OMX}{MnSymbolE}{m}{n}
\DeclareFontShape{OMX}{MnSymbolE}{m}{n}{
    <-6>  MnSymbolE5
   <6-7>  MnSymbolE6
   <7-8>  MnSymbolE7
   <8-9>  MnSymbolE8
   <9-10> MnSymbolE9
  <10-12> MnSymbolE10
  <12->   MnSymbolE12
}{}
\DeclareFontShape{OMX}{MnSymbolE}{b}{n}{
    <-6>  MnSymbolE-Bold5
   <6-7>  MnSymbolE-Bold6
   <7-8>  MnSymbolE-Bold7
   <8-9>  MnSymbolE-Bold8
   <9-10> MnSymbolE-Bold9
  <10-12> MnSymbolE-Bold10
  <12->   MnSymbolE-Bold12
}{}
\let\llangle\@undefined
\let\rrangle\@undefined
\DeclareMathDelimiter{\llangle}{\mathopen}%
                     {MnLargeSymbols}{'164}{MnLargeSymbols}{'164}
\DeclareMathDelimiter{\rrangle}{\mathclose}%
                     {MnLargeSymbols}{'171}{MnLargeSymbols}{'171}
\newcommand{\cP}{\mathcal P}
\newcommand{\cC}{\mathcal C}
\newcommand{\cM}{\mathcal M}
\newcommand{\cN}{\mathcal N}
\newcommand{\cL}{\mathcal L}
\newcommand{\dd }{\mathop{}\!\mathrm{d}}
\newcommand{\DD }{\mathop{}\!\mathrm{D}}
\newcommand{\ddt }{\dd t}
\newcommand{\R}{\mathbb R}
\renewcommand{\P}{\mathbb P}
\newcommand{\eps}{\varepsilon}
\newcommand{\E}{\mathbb E}
\newcommand{\greybox}[1]{
\begin{tcolorbox}[
  colback=gray!10,
  colframe=gray!50,
  arc=4mm,
  boxrule=0.5pt,
  left=2mm, right=2mm, top=1mm, bottom=1mm,
  enhanced,
  breakable,     
  title=\textbf{Guiding example}, 
  fonttitle=\normalsize\bfseries, 
  coltitle=black           
]
#1
\end{tcolorbox}
}
\title{An optimal experimental design approach to sensor placement in continuous stochastic filtering}
\author{Sahani Pathiraja, Claudia Schillings, Philipp Wacker}
\date{January 2025}
\begin{document}

\maketitle

\begin{abstract}
    Sequential filtering and spatial inverse problems assimilate data points distributed either temporally (in the case of filtering) or spatially (in the case of spatial inverse problems). Sometimes it is possible to choose the position of these data points (which we call sensors here) in advance, with the goal of maximising the expected information gain (or a different metric of performance) from future data, and this leads to an Optimal Experimental Design (OED) problem. Here we revisit an interpretation of optimising sensor placement as an integration with respect to a general probability measure $\xi$. This generalises the problem of discrete-time sensor placement (which corresponds to the special case where the probability measure is a mixture of Diracs)  to an infinite-dimensional, but mathematically more well-behaved setting. We focus on the continuous-time stochastic filtering setting, whose solution is governed by the Zakai equation.  We derive an expression for the Fr\'echet derivative of a general OED utility functional, the key to which is an adjoint (backwards in time) differential equation. This paves the way for utilising new gradient-based methods for solving the corresponding optimisation problem, as a potentially more efficient alternative to (semi-)discrete optimisation methods, e.g. based on greedy insertion and deletion of sensor placements.
\end{abstract}

\tableofcontents

\section{Main idea, prior work, and contributions}
This work focuses on parameter estimation problems with either a temporal dependency (a filtering or data assimilation problem) or a spatial varying parameter (more commonly called an inverse problem). Often, sensors are associated to positions in the temporal or spatial domain, and the problem of optimal sensor placement encompasses finding the best set of locations at which to take measurements. The most common approach is to directly work with the sensor locations $t_i$, 
either by solving a high-dimensional optimisation problem with difficult to use symmetry (under swapping the position of any two sensors), often using sparsity-inducing regularisation (such as $\ell^0$ or $\ell^1$) \cite{alexanderian2014optimal,haber2008numerical,haber2012numerical,ucinski2020d,das2020optimal}, by relaxing to a convex optimisation problem as in \cite{maity2022sensor}, by choosing a best subset (which is an NP-hard combinatoric problem \cite{ye2018complexity}), or greedy algorithms based on insertion and deletion \cite{aarset2024global,de2024ensemble,mula2025dynamical}. Alternative approaches include dynamical approximation \cite{mula2025dynamical} or particle-based methods for adaptive experimental design in \cite{iqbal2024nesting}, following ideas proposed more generally for experimental design in general in \cite{amzal2006bayesian}. 
Related work on sensor design includes \cite{zinage2022optimal}, where the authors directly optimise the observation operator / sensor gain in order to solve the Optimal experimental design (OED) problem for a linear-Gaussian Kalman-Bucy filter system. An alternative approach to the route taken here is to think of experimental design as a random variable (such as a Poisson process) from which measurement instances are sampled, as in \cite{xiong2023bias}. In situations where the signal dynamics is subject to active control, experimental design can be extended to the task of finding an optimal control policy, as in \cite{hooker2015control}. This provides a connection to optimal control, albeit in a way very different from our proposed approach.
 
Sensor placement is a specific case of an optimal experimental design problem. This is a fairly established field and an overview of historical research can be found in several textbooks and (review) articles, such as \cite{ucinski2004optimal,muller2004optimal,muller2005simulation,pukelsheim2006optimal,atkinson2007optimum,huan2014gradient,alexanderian2021optimal,huan2024optimal}. 

In this manuscript we draw attention to an idea originally developed in \cite{kiefer1952stochastic}, which is to relax the setting of evaluation in discrete sensor locations to integration with respect to a probability measure\footnote{with the former setting being a special case by choosing an empirical probability measure supported on finitely many locations, $\xi = \tfrac1N\sum_{i=1}^N \delta_{t_i}$} $\xi$. See figure \ref{fig:main_idea} for an illustration of this idea. This perspective has been revisited a few times, such as in the review paper \cite{huan2024optimal}, and recently for linear (frequentist) statistical inverse problems in \cite{jin2024optimal}. We note that ``sensor scheduling'' and related terms are not well-defined and different communities mean very different things. For example, we do \textit{not} consider the setting where there is a given set of sensors, of which only one can be active at a time (and optimal sensor scheduling means finding the optimal switching procedure), such as in \cite{maity2022sensor}.
\textbf{Our contributions are: }
\begin{itemize}
    \item We introduce measure-based sensor design in the context of stochastic filtering. 
    \item We develop a novel formulation of log-Zakai-type stochastic filtering based on this relaxation, where the design variable $\xi$ enters both drift and diffusion terms in a natural way.
    \item We derive an algorithmic expression for the Fr\'echet derivative of general utility functionals used for optimal sensor placement, based on the adjoint approach to gradients, which leads to a (backwards in time) adjoint differential equation with a structure similar to (and computational effort comparable to) the Zakai equation.
    \item As a special case, we derive (in the linear-Gaussian case), an equivalent expression for the Kalman-Bucy filtering equations.
    \item With this, we open the door to efficient gradient based optimisation algorithms, and we point out several ways in which the present work can (and should) be extended. 
\end{itemize}
The calculations in this manuscript are mostly formal, with rigorous justification of e.g. the well-posedness of the adjoint equation omitted, as the main purpose of this work is to present a new approach to OED in the continuous time filtering setting.
\begin{figure}
    \centering
    \includegraphics[width=\linewidth]{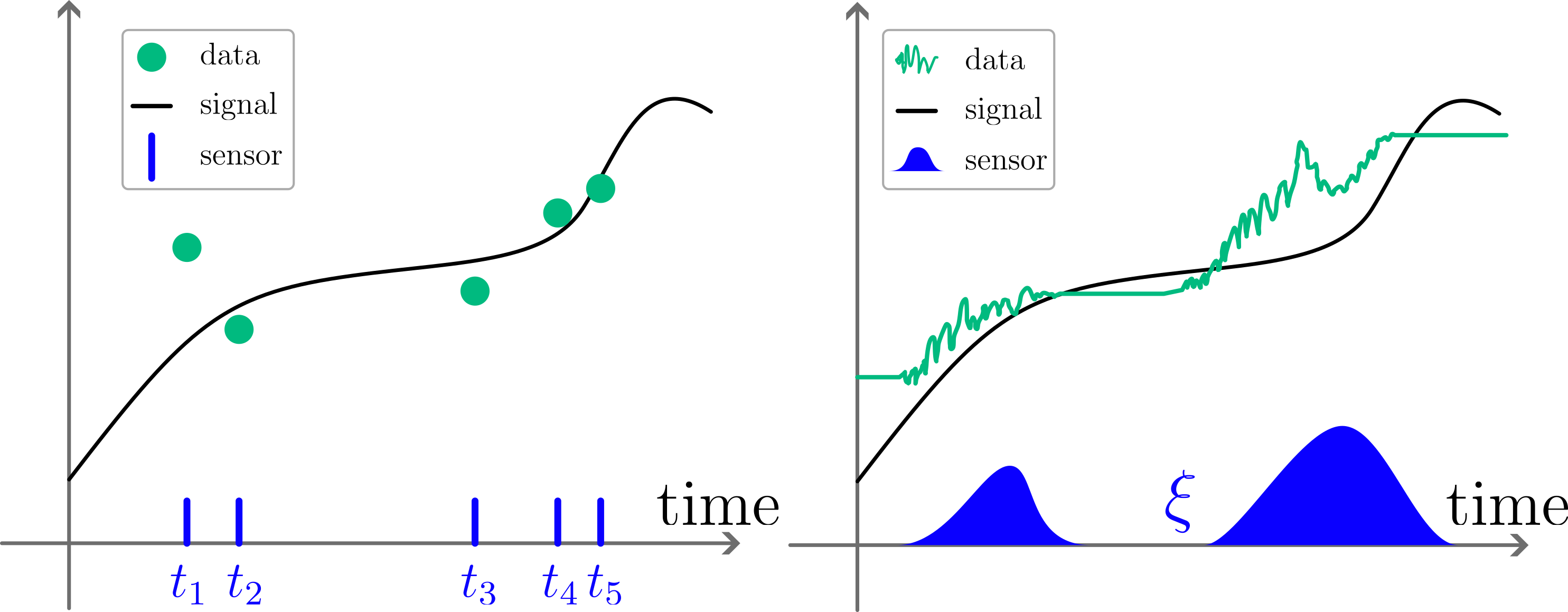}
    \caption{Left: Sensor placement at discrete time points $\{t_i\}$. Right: Sensor setup corresponds to temporal integration with respect to a probability measure $\xi$.}
    \label{fig:main_idea}
\end{figure}

We begin with a simple motivational example before discussing the necessary mathematical prerequisites.

\subsection{Setup of guiding example: Monitoring logistic growth}
We will use the following simple one-dimensional parameter estimaton problem as a guiding example throughout the manuscript, as a way of motivating ideas and notation.

\greybox{
We consider an unknown parameter which we model as a time-independent state $X(t) = x_0$, which we interpret as the rate of an logistic growth process of a concentration $Z(t) = g(X(t), t) = \frac{\exp(x_0 t)}{z_0^{-1} - 1 + \exp(x_0 t)}$, where the observation map is $g(x, t) = \frac{\exp(x t)}{z_0^{-1} - 1 + \exp(x t)}$, with a fixed and known $z_0 \in (0,1)$. 

Very generally, our goal will be to find the best experimental setup such that we can learn as much as possible about the value of the unknown growth parameter $x_0$. As a starting point (although we will generalise this considerably), we assume that we collect exactly one observation of form 
\begin{equation}\label{eq:log_dirac}
    Y = g(X_\tau, \tau) + \eps,
\end{equation}
where $\tau$ is the time at which we measure the concentration $z$. This problem has a closed form solution: If we use D-optimality (see, e.g., \cite{pukelsheim2006optimal}) as a criterion for optimal experimental design, we should be choosing $\tau = \frac{\log (z_0^{-1}-1)}{x_0}$ (note that this ignores prior information on $x_0$). Intuitively, this is the time at which the ``fan of trajectories'' spreads out the most, so pointwise observation at this time is bound to give a high amount of information about the underlying parameter.

    \includegraphics[width=0.75\linewidth]{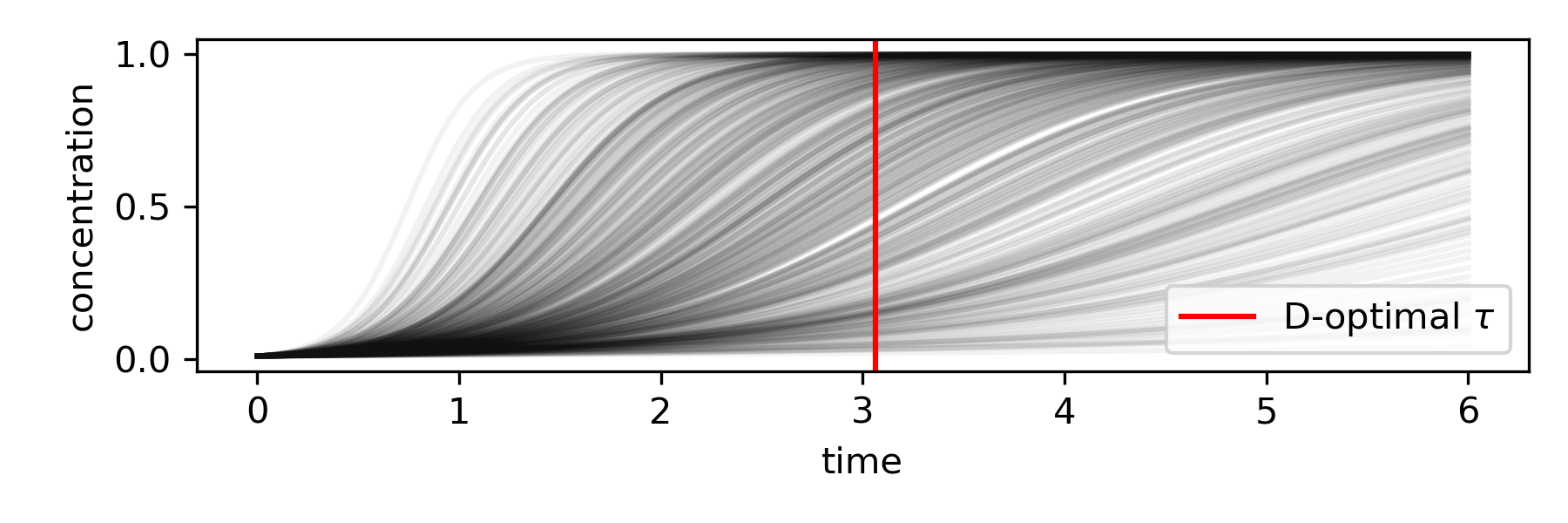}

This approach does not generalise well to optimising several evaluation times $\tau_i$ where we collect data $Y_i = g(X_{\tau_i},\tau_i) + \eps_i$ simultaneously, and becomes either a high-dimensional problem with a lot of unusable symmetry (since permutation of the $\tau_i$ leads to the same outcome), or to a tricky discrete optimisation problem (if we define a finite set of viable time points $T$ in advance, and we are to pick an optimal subset). 

Our goal in this manuscript is to relax the data acquisition process from the discrete case of individual observations to continuous-time monitoring, but allowing for a time-inhomogeneous ``sensor density''. Put succinctly, we start with \eqref{eq:log_dirac}, which can be thought of as an observation model with sensor schedule $\xi = \delta_\tau$, a Dirac measure at position $t=\tau$. Then we are going to generalise this to the case of several pointwise observations $Y_i = g(X_{t_i}, t_i) + \eps_i$ (with possibly variable observational noise strength depending on a parameter $\alpha_i$), which corresponds to a sensor distribution of $\xi = \sum_i \alpha_i \delta_{t_i}$, an empirical probability measure at times $t_i$ such that its total mass $\sum_i \alpha_i = 1$ sums to one. Finally, we will replace $\xi$ by a general probability distribution, usually assuming that $\xi$ has a Lebesgue density. We think of $\xi$ as the experimental design variable and will call it the \textbf{sensor schedule}. The fact that $\xi$ is a probability measure represents bounded resources. After setting up an optimality criterion (measuring how well the experiment allows us to learn about underlying parameters or states, depending on the context), this will allow us to formulate and solve the associated optimalisation problem of finding the optimal sensor schedule $\xi$.
}

Before discussing the continuous time case, we remind ourselves of a few basic concepts from stochastic filtering, see for example \cite{bain2009fundamentals} for a comprehensive overview.

\subsection{Background on Stochastic filtering}
We denote all variables in this section with a tilde, such as $\tilde X(t)$, since we will derive modified versions of these formulae later on.
\paragraph{Nonlinear continuous-time filtering problem}
We consider a latent stochastic process $\tilde X(t)$ (the signal) with infinitesimal generator $\cL$ and data (the observation) given by 
\begin{equation}\label{eq:filtering_cts_guiding}
    \dd \tilde Z(t) = g(\tilde X(t))\dd t + \,\Gamma^{1/2}(t)\dd W_t.
\end{equation}
The conditional distribution of $\tilde X(t)$ given the path $\{\tilde Z(s)\}_{s\leq t}$ is given by $\tilde q_t$ and evolves according to the Kushner-Stratonovich equation
\begin{align*}
    \dd \tilde q_t(x) &= (\cL^\star \tilde q_t)(x)\dd t + \tilde q_t(x)\left(g(x) - \bar g_t \right)^\top \Gamma^{-1}(t)\left(\dd \tilde Z(t) -\bar g_t\dd t\right),
\end{align*}
where $\bar g_t = \int g(x)q_t(x)\dd x$. It is often easier (computationally less intensive and numerically more stable) to work with an unnormalised density $\tilde{p}_t$ instead, which is governed by the Zakai equation
\begin{equation}\label{eq:zakai}
    \dd \tilde p_t(x) = (\cL^\star \tilde p_t)(x)\dd t + \tilde p_t(x) g(x)^\top \Gamma^{-1}(t)\dd \tilde Z(t),
\end{equation}
where we can always compute the normalised density via $\tilde q_t(x) = \tilde p_t(x)/\int \tilde p_t(y)\dd y$ from $\tilde p_t$. We find that numerical stability can be improved further by tracking the unnormalised log-density: This makes sure that (taking the exponential), $\tilde p_t(x)$ is always non-negative, which is not the case for naive implementations of \eqref{eq:zakai}. Using a simple It\=o-type computation, we have $
    \dd \log \tilde p_t(x) = {p_t(x)}^{-1}{\dd\tilde p_t(x)} -\tfrac{1}{2}{p_t(x)}^{-2}{(\dd p_t(x))^2}$, so
\begin{equation}\label{eq:logzakai}
    \dd \log \tilde p_t(x) = \frac{\cL^\star  \tilde p_t(x)}{\tilde p_t(x)}\dd t - \frac{1}{2}\left\|g(x)\right\|_{\Gamma(t)}^2\dd t +g(x)^\top \Gamma^{-1}(t)\dd \tilde Z(t). 
\end{equation}
It is also possible to derive a similar form of the logarithmic Kushner-Stratonovich equation, which is omitted here.

\paragraph{Linear continuous-time filtering problem.}
If the signal and observation dynamics have linear and Gaussian structure,
\begin{equation}\label{eq:linear_Gauss_unweighted}
\begin{split}
    \dd \tilde X(t) &= L \tilde X(t)\dd t + \Sigma^{1/2}(t)\dd B_t\\
    \dd \tilde Z(t) &= H \tilde X(t)\dd t + \Gamma^{1/2}(t)\dd W_t,
\end{split}
\end{equation}
then the conditional distribution of $\tilde X(t)$ given the path $\{\tilde Z(s)\}_{s\leq t}$ is given by $\tilde p_t = \cN(\tilde m(t), \tilde C(t))$, where
\begin{equation}\label{eq:Kalman_Bucy}
\begin{split}
    \dd \tilde m(t) &= L \tilde m(t) + \tilde C(t) H^\top\Gamma(t)^{-1}\left(\dd \tilde Z(t)-H\tilde m(t)\dd t\right)\\
    \frac{\dd \tilde C(t)}{\dd t} &=L \tilde C(t) + \tilde C(t) L^\top + \Sigma(t) - \tilde C(t) H^\top \Gamma^{-1}(t) H \tilde C(t)
\end{split}
\end{equation}
These are the celebrated Kalman-Bucy filtering equations. 

\paragraph{Nonlinear and linear discrete-time filtering problem.}
We can also accommodate the setup where observations are not collected continuously, but at discrete time steps: In this case we will use $y_i$ for the discrete measurements obtained via
\begin{equation}\label{eq:filtering_disc}
    Y_{t_i} = g(X_{t_i}) +  \Gamma^{1/2}(t_i)\eps_{t_i}.
\end{equation}
Filtering is then done by evolving the filtering distribution according to the Fokker-Planck equation, and then assimilating data momentarily (and discontinuously) as they arrive, via Bayes' law. If the signal and observation dynamics are linear, then we recover the usual Kalman (discrete-time) filter.

\subsection{Sensor placement as an optimal experimental design task}
All of these filtering settings mentioned take for granted a given way of information acquisition. Either, continuously (in the case of continuous-time filtering in the form $\dd \tilde Z(t) = g(\tilde X(t))\dd t + \Gamma^{1/2}(t)\dd W_t$), or at given discrete timepoints (in the case of discrete-time filtering in the form $Y_{i} = g(\tilde X_{t_i}) + \Gamma^{1/2}(t_i)\eps_i$).

We can imagine, though, that conscious design of the observational setup (or schedule) can provide more informative data, e.g. by contracting the filtering distribution more strongly.

If observations arrive at discrete time steps, it is conceivable that there are time steps which are more suitable than others. An obvious example for this is when the observational noise covariance $\Gamma(t)$ is time-dependent: If we pick observational time points $t_i$ where $\Gamma(t_i)$ is small, we will likely get more informative data. Or, in our guiding example of logistic growth, observation times $\tau\approx 0$ or $\tau \gg 10$ are likely to not provide much information, either, since $g(X_\tau,\tau)$ is essentially a deterministic value ($0$, or $1$, respectively) regardless of the value of $x_0$.

If observation is performed continuously, then this is less obvious: How can we observe ``better than continuously?'' We need to keep in mind, though, that even continuous observation of the form \eqref{eq:filtering_cts} does not translate into perfect information: Data is collected continuously, but it is simultaneously perturbed by measurement noise. We can now imagine concentrating our ``observational attention'' towards times where, for example, $\Gamma(t)$ is small in comparison (or, as in our guiding example, towards times where we expect to learn more about the signal process). This leads to data collection \textbf{more strongly concentrated} on some intervals than others. We can imagine changing from ``uniform observation'' (which is the standard set-up for stochastic filtering) to ``observation with variable sensor strength'' (mentally replacing a uniform distribution with, e.g., a Gaussian distribution). We model variability in sensor strength with a probability measure $\xi$, which we call the \textbf{sensor schedule}. Intuitively, $\int_s^t \dd\xi$ quantifies the fraction of ``sensor budget'' allocated to the time interval $[s,t]$. 

\subsection{Structure of the Manuscript}
In section \ref{sec:derivation_weighted} we give more details on how this allows us to derive \textbf{weighted} filtering equations, arising from observational models where we have an experimental design choice of ``when to observe, and how densely'' (the sensor schedule $\xi$). We will start with the discrete-time setup since this is more instructive, and then informally derive its continuous-time version. We call these \textbf{scheduled filtering equations}, which will yield our main filtering setup subject to which we will be performing experimental design. In short, our plan is as follows: Starting from discrete observations (corresponding to a ``sensor schedule'' $\xi$ being a weighted sum of Dirac measures), we generalise to arbitrary probability measures $\xi$. This allows us to pose a corresponding stochastic filtering problem depending on the sensor schedule $\xi$. In section \ref{sec:setupOED} we frame the search for an optimal sensor schedule $\xi$ as an optimal experimental design problem based on a utility functional $U$ dependent on $\xi$. In section \ref{sec:solvingOED}, we show how the technique of adjoints allows us to derive an expression for the derivative of $U$ with respect to $\xi$, which we can translate into a gradient using a suitable geometry, and perform an optimisation procedure towards finding the optimal sensor schedule $\xi$. Throughout the manuscript, we will keep revisiting our simple guiding example to illustrate these ideas in a friendly example, while keeping the theory fairly general. We close with some numerical experiments in section \ref{sec:numerics}, demonstrating that everything works as expected, and conclude with remarks on insights we got, as well as comment on opportunity for future work in \ref{sec:conclusion}. We begin now with a short overview of the state of research in Optimal Experimental Design, with a focus on the problems we are interested in this manuscript, and fix notation.

\subsection{Notation}
\begin{itemize}
    \item $(\cM,\llangle \cdot,\cdot\rrangle)$ is the space of square matrices $\R^{n\times n}$ (with the dimensionality usually clear from context), together with the Frobenius inner product: For $M,N\in \cM$, we set $\llangle M,N\rrangle= \sum_{ij}M_{ij}N_{ij}  = \tr(M^\top N)$. We denote the subset of symmetric and positive definite matrices (covariance matrices) as $\cM^+$.
    \item Let $U$ be a vector space. Then  $U^\star$ is the vector space of bounded linear maps in $\cL(U,\R)$. We will sometimes write dual action as an inner product, i.e. for $\phi\in U^\star$, we may write $\phi(u) = \langle \phi,u\rangle_U$.
    \item If $A : U\to V$ is a linear operator, then $A^\star: V^\star \to U^\star$ is the adjoint operator defined via $(A^\star \psi)(u) = \psi(Au)$ for $u\in U$, $\psi\in 
    V^\star$.
    \item For a map of the form $\cL(\dot C, C, \xi, t)$, we denote by $\partial_{\dot C}\cL(\dot C, C, \xi, t)$ the partial derivative of $\cL$ in $(\dot C, C, \xi, t)$ with respect to the first component $\dot C$. Analogously, we define $\partial_C \cL, \partial_\xi \cL$, and $\partial_t \cL$.
    \item If $f:S\to V$, $f: x\mapsto f(x)$ with $S\subset U$ is a map, then $\DD_x f(x)[h]$ is the Fr\'echet derivative of $f$ in $x$ in direction $h$. 
\end{itemize}
\section{Optimal sensor design for stochastic filtering}

\subsection{Preamble: Allocating a sensor budget}
To get ourselves familiar with the general idea of sensor placement, let's consider the following straightforward setup where we are trying to optimise spending a ``sensor budget'' between two fixed sources of data. Assume we want to infer the value of an unknown parameter $X\in \R$ and we have two ways of getting information about $X$,
\begin{align*}
    W_1 &= X + \frac{\gamma_1}{\sqrt{\alpha_1}}\eps_1\\
    W_2 &= X + \frac{\gamma_2}{\sqrt{\alpha_2}}\eps_2.
\end{align*}
This means that we assume that $\eps_i\sim\cN(0,1)$ is standard normal observation error, and the data $W_i$ arises from perturbing the unknown $X$ with additive Gaussian noise scaled by a standard deviation $\gamma_i$ depending on $i$, and additionally on a term $ \alpha_i$ that models a budgetary constraint, where we say that 
\begin{equation}\alpha_i\geq 0\quad \sum_i \alpha_i = 1. \label{eq:budget}    
\end{equation}
Let's for concreteness assume that $\gamma_1 < \gamma_2$, i.e., the first datapoint is inherently more reliable than the second datapoint.
Obviously, letting both $\alpha_i\to+\infty$ would be optimal from a statistical perspective (since $W_i\to X$ is going to converge to a perfect observation of $X$), but the budget constraints forbid this.  One way to interpret this budgetary constraint is (noting that $\alpha_i^{-1}$ is a variance-type quantity, so $\alpha_i$ is a precision-type quantity) as an upper bound on \textit{total precision}.

The question now is, \textit{how should we set $\alpha_i$} (in alignment with the budgetary constraints \eqref{eq:budget}) such that our information about $X$ is maximised. One way to think about the quality of data is the signal-to-noise ratio (SNR), which in this case (up to a constant) is $\gamma_i/\sqrt{\alpha_i}$ for the $i$-th datapoint.

The most straightforward way of solving this question is to make sure that the likelihood $\P(W_1,W_2|X)$ is as sharply peaked as possible. A quick calculation using quadratic expansion shows that
\begin{align*}
    \P(W_1,W_2|X) &= \P(W_1|X)\P(W_2|X) = \exp\left(-\frac{1}{2}\left[\frac{\alpha_1}{\gamma_1^2} + \frac{\alpha_2}{\gamma_2^2}\right]\,\left(X - \frac{\alpha_1/\gamma_1^2 W_1 + \alpha_2/\gamma_2^2W_2}{\frac{\alpha_1}{\gamma_1^2} + \frac{\alpha_2}{\gamma_2^2}}\right)^2  \right).
\end{align*}
Making the likelihood as sharply peaked as possible amounts to the maximisation problem
\begin{align*}
    \max \frac{\alpha_1}{\gamma_1^2} + \frac{\alpha_2}{\gamma_2^2},\quad \alpha_i\geq 0,\, \alpha_1+\alpha_2 = 1,
\end{align*}
which can be seen to be solved by $\alpha_1 = 1$ and $\alpha_2 = 0$ (because of our assumption that $\gamma_1 < \gamma_2$).

The way $W_i$ is defined is slightly inconvenient: For $\alpha_2\to 0$ (which is the optimal choice), $W_2 \to \pm\infty$ (with unclear sign). By multiplying both sides by a factor of $\alpha_i$, we get the following, completely equivalent, model:
\begin{equation}\label{eq:rescaled_model}
    Y_i := \alpha_i W_i = \alpha_i X+ \gamma_i \sqrt{\alpha_i}\eps_i,
\end{equation}
which doesn't show this behaviour. Since $\alpha_i$ is, at time of observation, a fixed and deterministic number, it does not matter whether we work with $W_i$ or $Y_i$,\footnote{and the Bayesian posterior would be identical, since $\P(X|W_1,W_2) = \P(X|Y_1,Y_2)$} but $Y_i$ has bounded second moments for all possible budgetary allocations $\alpha_i$. Also, the SNR remains $\gamma_i/\sqrt{\alpha_i}$ under this constant rescaling.

We will now generalise these ideas to the setting where we observe a time-dependent quantity $X(t)$ (rather than a constant value $X$), and then take a continuous-time limit, starting our consideration with the form \eqref{eq:rescaled_model} right away.

\subsection{Derivation of filtering equations with inhomogeneous sensor density}\label{sec:derivation_weighted}
We consider a latent stochastic process with an infinitesimal generator $\cL$. The most general setting in this context will be a diffusion process $X(t)\in \R^n$ for all $t \in [0,T]$,\footnote{We assume from now on that the filtering problem happens on a fixed time interval, i.e. we can anticipate data to arrive within $[0,T]$ for some $T<\infty$. This means, regardless of whether we consider the linear or nonlinear, or the discrete or continuous setting, we will have assimilated all observations at time $t=T$, and our full posterior distribution is given by $p_T$, the filtering distribution at $t=T$.} with
\begin{align}
    \dd X(t) &= b(X(t),t)\dd t + \sigma(X(t),t)\dd W_t,
\end{align}
where $b(X(t),t)\in \R^n$ is a drift coefficient, $\sigma(X(t),t)\in \R^{n\times m}$ is a matrix-valued diffusion coefficient, and $W_t\in \R^{m}$ is Brownian motion. In this case, $(\cL f)(x) = \sum_i b_i(x,t) \partial_{x_i}f(x) + \frac{1}{2}\sum_{i,j}(\sigma(x,t)\sigma(x,t)^\top)_{ij}\partial^2_{x_i,x_j}f(x)$ and its dual is $(\cL p)(x) = -\sum_i \partial_{x_i}(b_i(x,t)p(x)) + \frac{1}{2}\sum_{ij}\partial^2_{x_i,x_j}(\sigma(x,t)\sigma(x,t)^\top p(x))$. We assume that $X(t)$ is not directly accessible, but we collect a sequence of observations following a model similar to \eqref{eq:filtering_disc}, but where we can allocate a ``budget of sensor readings'' by choosing a probability vector $\alpha \in (\R^+)^N$ with $\sum_i \alpha_i = 1$ leading to observations
\begin{equation}\label{eq:filtering_disc_weighted}
    Y_{t_i} = \alpha_i g(X_{t_i}) + \sqrt{\alpha_i }\, \Gamma^{1/2}(t_i)\eps_{t_i}.
\end{equation}
Note how $\alpha_i/\sqrt{\alpha_i} = \sqrt{\alpha_i}$ is proportional to the signal-to-noise ratio (SNR) of the $i$th measurement $y_i$, and allocating the finite budget of ``precision'' values $\alpha_i$ corresponds to prioritising observations with respect to their SNR.  We can define the \textbf{empirical probability measure of observational design} $\xi$ as a weighted sum of Dirac measures
\begin{equation}
    \xi = \sum_{i=1}^N \alpha_i \delta_{t_i}(t).
\end{equation}
Letting the budget constraints be represented by the fact that $\xi$ is a probability measure is done without loss of generality, we could use any finite measure and rescale \eqref{eq:filtering_disc_weighted} accordingly.
In \eqref{eq:filtering_disc_weighted}, we have an observation mapping $H$, observational noise covariance $\Gamma(t)$, i.i.d. Gaussians $\eps_{t_i}$. 

Following the standard approach in going from discrete to continuous time in stochastic filtering, we now define a cumulative observation process as $Z(t) = \sum_{i: t_i\leq t}y_{t_i} $ which is characterised as
\begin{align*}
    Z(t) - Z(s) &= \sum_{i: s < t_i\leq t}\alpha_i g(X_{t_i}) + \sum_{i: s < t_i\leq t} \sqrt{\alpha_i}\,\Gamma^{1/2}(t_i)\eps_{t_i} = \int_s^t g(X(r))\dd \xi(r) + \Delta_\xi(t)-\Delta_\xi(s),
\end{align*}
where  $\Delta_\xi(t) :=\sum_{i: t_i\leq t} \sqrt{\alpha_i}\,\Gamma^{1/2}(t_i)\eps_{t_i}$. The object $\Delta_\xi(t)$ is a Gaussian random process such that $\Delta_\xi(s)$ and $\Delta_\xi(t) - \Delta_\xi(s)$ are independent  for $s \leq t$. The Kolmogorov extension theorem states that $\Delta_\xi(t)$ is uniquely characterised by its finite-dimensional distributions, and we can define:

\begin{defn}\label{def:Delta}
    Let $\xi$ be a probability measure on $([0,T], \mathcal B([0,T]))$ and $\Gamma:[0,T]\to \cM^+$ a time-dependent covariance matrix. Then we define $\Delta_\xi$ as the unique Gaussian process satisfying 
    \begin{align*}
    \E(\Delta_\xi(t)) &= 0\\
    \E[\Delta_\xi(r)\Delta_\xi(t)^\top]  &=  \sum_{i: t_i\leq \min\{r,t\}}\alpha_i \Gamma(t_i) = \int_0^{\min\{r,t\}} \Gamma(s)\dd \xi(s).
\end{align*}
\end{defn}
\begin{rem}

This setting encompasses two important special cases: $\xi = \sum_{i=1}^N \alpha_i \delta_{t_i}(t)$ (empirical measure, discrete probability distribution) and $\dd \xi(t) = \xi(t)\dd t$ (absolutely continuous with respect to Lebesgue measure).

We can write both of these versions notationally as $\Delta_\xi(t) = \cN(0,\int_0^t \Gamma(u)\dd\xi(u))$, where we informally write $\dd\Delta_\xi(t)$ for the infinitesimal increments.
\end{rem}

\begin{defn}[Scheduled filtering problem]
    We call inference of $X(t)$, $t\in[0,T]$ from observations $Z(t)$, where
    \begin{align*}        
    \dd X(t) &= b(X(t),t)\dd t + \sigma(X(t),t)\dd B_t\\
    \dd Z(t) &= g(x(t))\dd \xi(t) + \dd\Delta_\xi(t)
    \end{align*}
    the scheduled filtering problem, where we call the probability measure $\xi$ the \textit{sensor schedule}. 
    
    If $\xi$ has a density (notationally identified with $\xi(t)$), then by the It\=o isometry, $\Delta_\xi(t) = \int_0^t\sqrt{\xi(u)}\,\Gamma^{1/2}(u)\dd W(u)$ for standard Brownian motion $W$, so we can also write $\dd\Delta_\xi(t) = \sqrt{\xi(t)}\Gamma^{1/2}(t)\dd W(t)$. Then $Z(t)$ is a diffusion process of form
    \begin{equation}\label{eq:filtering_cts}
    \dd Z(t) = g(X(t))\xi(t)\dd t + \sqrt{\xi(t)}\,\Gamma^{1/2}(t)\dd W(t).
\end{equation}
\end{defn}

\begin{rem}
     It is tempting to view \eqref{eq:filtering_cts} as a time-rescaling of the usual measurement process in stochastic filtering,  
     \begin{align}
        \label{eq:ctsfiltobs}
         dZ(t) = g(X(t))dt + \Gamma^{1/2}(t)dW(t). 
     \end{align}
     This interpretation does not seem to hold in an obvious way, due to the dependence of $\tilde{Z}(t)$ on the signal process $\tilde{X}(t)$, as can be seen by the calculations in appendix \ref{sec:rescale}. 
\end{rem}

Whether we think of the sensor schedule $\xi$ as being a singular or absolutely continuous measure, we can write the Kushner-Stratonovich equation for the filtering problem \eqref{eq:filtering_cts} formally as
\begin{align*}
    \dd q_t(x) &= \cL^\star q_t(x)\dd t + \xi(t)^{-1}\, q_t(x)\left(g(x)\xi(t) - \bar g_t\xi(t) \right)^\top \Gamma^{-1}(t)\left(\dd Z(t) - \bar g_t\dd \xi(t)\right)\\
    &=\cL^\star q_t(x)\dd t + q_t(x)\left(g(x) - \bar g_t \right)^\top \Gamma^{-1}(t)\left(\dd Z(t) -g_t\dd \xi(t)\right).
\end{align*}
We will prefer to work with the Zakai equation instead, 
\begin{align*}
    \dd p_t(x) &= (\cL^\star p_t)(x) + p_t(x)g(x)^\top\Gamma(t)^{-1}\dd Z(t)
\end{align*}
which governs an unnormalised density $p_t(x)$ where $q_t(x) = p_t(x)/\int p_t(y)\dd y$. The logarithmic form of the Zakai equation is given by
\begin{equation}\label{eq:log-Zakai}
    \dd \log p_t(x) = \frac{\cL^\star  p_t(x)}{p_t(x)}\dd t - \frac{1}{2}\left\|g(x)\right\|_{\Gamma(t)}\dd\xi(t)+g(x)^\top \Gamma^{-1}(t)\dd  Z(t)
\end{equation}

The probability measure $\xi$ represents an \textbf{experimental design} governing the ``observational attention'' (or ``sensor schedule'') to specific subsets of the observational time horizon, either as specific points at which to measure the signal (in the discrete-time setting), or (in the continuous-time setting) as a continuous concentration of ``observational attention'' on the full interval. The fact that $\xi$ is a probability measure (and thus integrates to $1$) represents a budget limitation.

We continue with our guiding example from above.
\greybox{
Since $X(t) = x_0$ is constant, the generator is $\cL = 0$. The generalised measurement model with sensor schedule $\xi$ (assumed to have a Lebesgue density, also written as $\xi(t)$) yields an observation process
\begin{equation}\label{eq:data_guiding}
    \dd Z(t) = g(X(t),t)\xi(t)\dd t + \gamma\sqrt{\xi(t)}\dd W_t
\end{equation}
For example, if $\xi(t) = \delta_{\tau}(t)$, then this (informally) collapses to the pointwise observation problem via $y = \exp(-x_0 \cdot \tau) +\gamma \epsilon$, where $\epsilon\sim \cN(0,1)$, and where $z(s) = 0$ for $s < \tau$, and $z(s) = y$ for $s \geq \tau$. The log-Zakai equation is
\begin{equation}\label{eq:logzakai_guiding}
    \dd \log p_t(x) = \frac{1}{\gamma^2}g(x,t)\dd Z(t) - \frac{1}{2\gamma^2}g(x,t)^2\xi(t)\dd t.
\end{equation}
The next figure shows how the observation process $Z(t)$ responds to a specific choice of $\xi$, with the left column showing an exponentially distributed sensor schedule $\xi = \mathrm{Exp}(1)$, and the right column featuring a uniform $\xi = \mathrm{Unif}[2,3]$ schedule. We can see how $\dd Z$ is $0$ outside the support of $\xi$, and in fact the SNR scales with the density of $\xi$. The resulting observational data stream $Z(t)$ is then assimilated via the log-Zakai equation, with the last row in both cases showing the final filtering distribution $p_T$, compared to the prior (dashed line). We can already see that the exponential sensor schedule seems to be more informative than the specific uniform distribution chosen, as can be seen from the fact that the filtering distribution in the left column is more contracted than in the right column.

\includegraphics[width=0.5\textwidth]{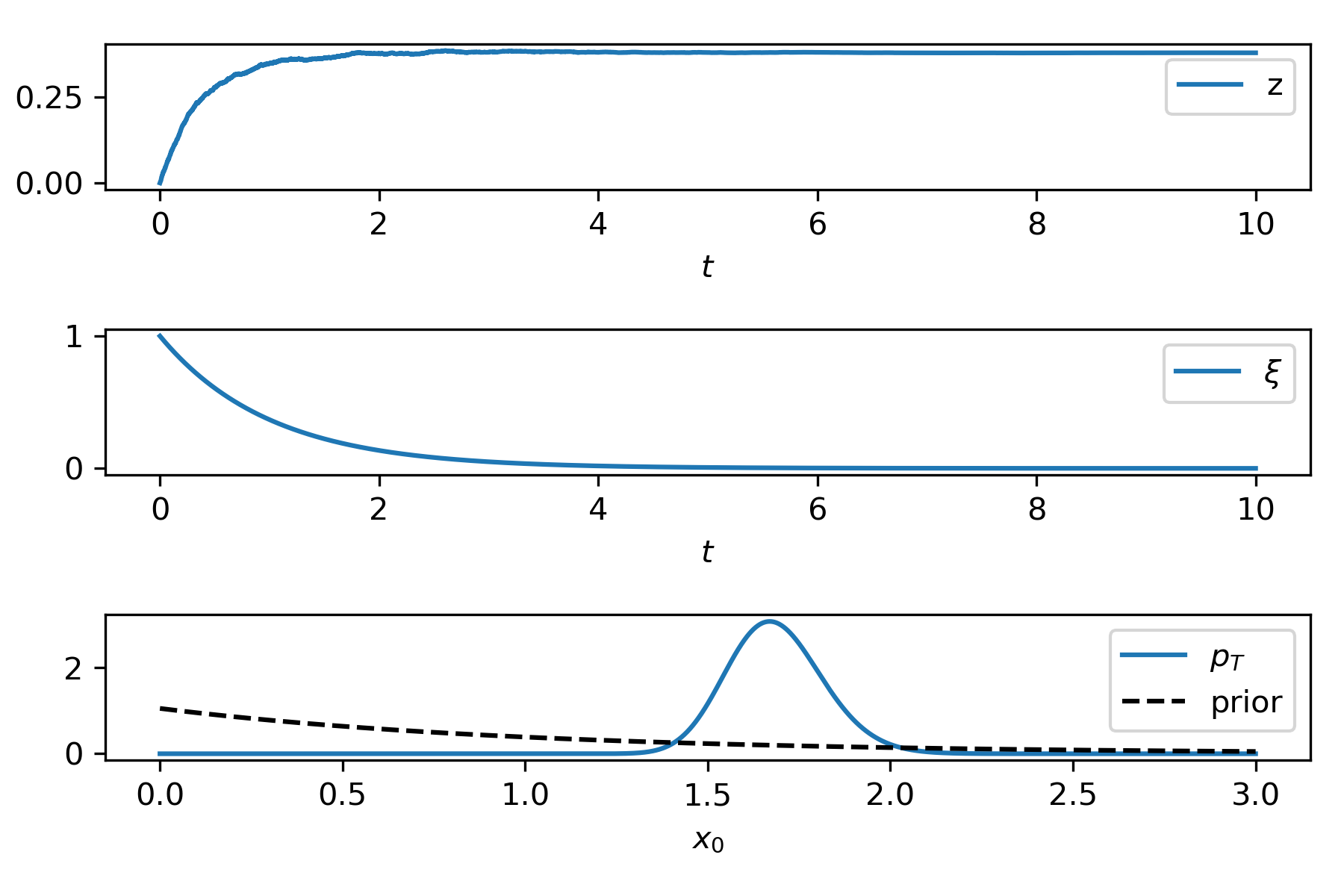}%
\includegraphics[width=0.5\textwidth]{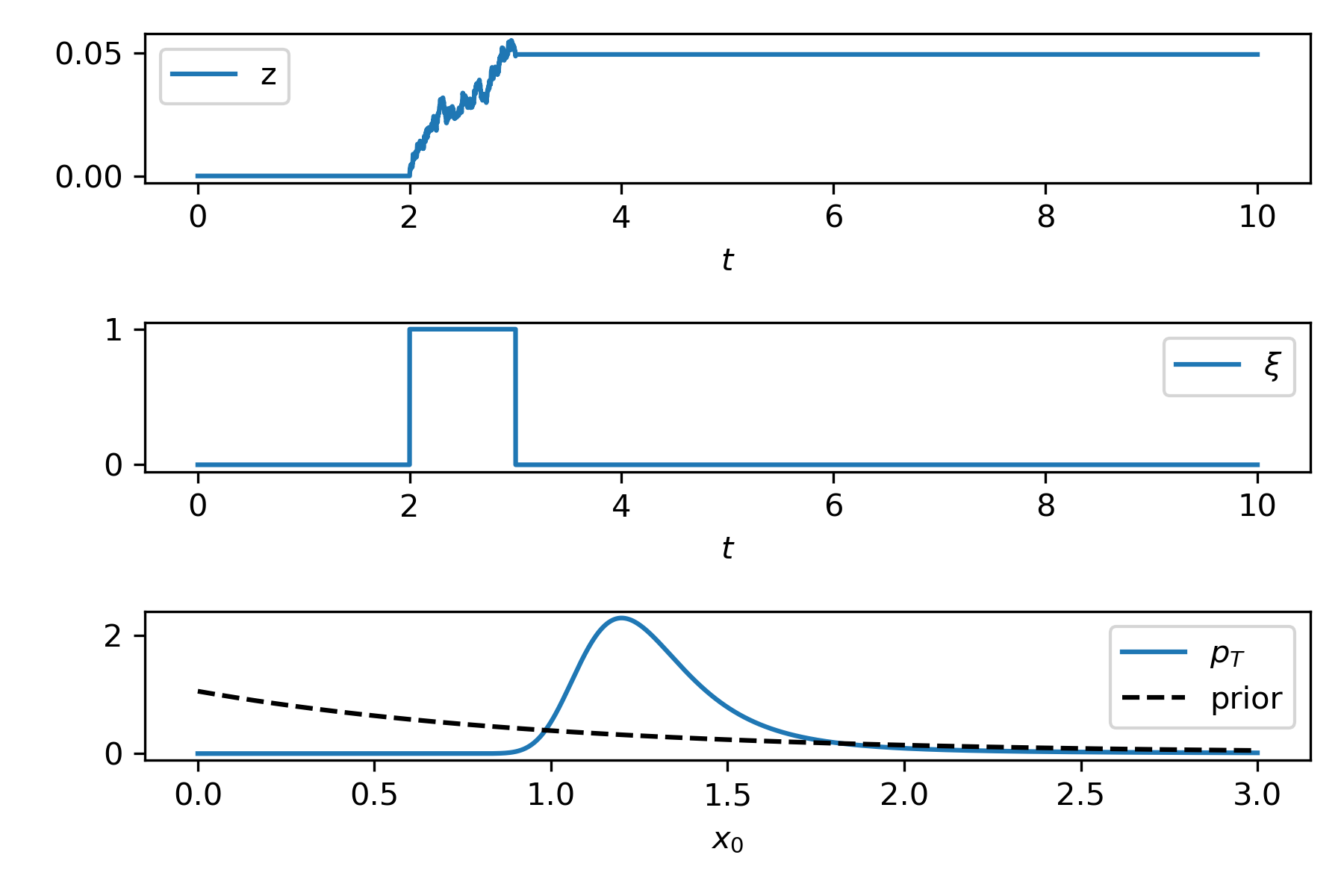}
}

\subsection{The optimal experimental design problem}\label{sec:setupOED}

We now pose an OED problem by associating a utility $U$ with desirable properties of $p_T$. This could be, for example, the KL-divergence between $p_T$ and $p_0$ (as a measure for contraction, which we typically want to be as large as possible). An alternative choice is the variance of $p_T$ (also as as a measure of contraction, where we typically want the variance to be as small as possible). Other utilities are of course possible, and inspiration may be taken from other classical measures of uncertainty known to the literature of OED \cite{pukelsheim2006optimal}.

In any case, the utility will be a function of the posterior distribution which in turn depends on the choice of sensor design $\xi$. Choosing a design leading to an optimal utility defines our optimal experimental design problem. 

To fix notation, let $\cL$ be the infinitesimal generator of the latent process $X(t)$ and $U_1 :\cP \to \R$ be a utility function to be minimised. The OED problem that we want to solve is finding the best design $\xi$ with density $\xi$ such that the expected utility is minimised:
\begin{align*}
    &\min_{\xi \in \cP} \E U_1(p),\quad \text{ where } p_t \text{ solves }\\
    \dd p_t(x) &= \cL^\star p_t(x)\dd t + p_t(x)\left(g(x) - \bar g_t \right)^\top \Gamma^{-1}(t)\left(\dd Z(t) - \bar g_t\dd \xi(t)\right),\quad \text{ and }\\
     \dd Z(t) &= g(X(t))\dd \xi(t) + \dd\Delta_\xi(t),
\end{align*}
where $\Delta_\xi$ was defined in definition \ref{def:Delta}.

\begin{rem}
    The expectation is to be taken across path realisations of $X(t)$ and $Z(t)$, similar to how EIG in standard OED contexts is computed by taking the expectation with respect to both parameters and data.
\end{rem}
\begin{rem}
    In practice we will use the log-form of the Zakai equation instead of the Kushner-Stratonovich equation.
\end{rem}

A suitable candidate for the utility function here would be to choose $U_1(p) = -\alpha D_{\mathrm{KL}}(q_T\mid\mid q_0) - \int_0^T D_{\mathrm{KL}}(q_t\mid\mid q_0)$, where $q_t$ is the normalised version of $p_t$. This optimisation problem strives to maximise a certain combination (mediated by $\alpha > 0$) of the expected information gain between the final filtering distribution $q_T$ and the prior $q_0$, and a similar integral-type term for the full filtering path. 
\greybox{
We will now convince ourselves that choosing a suitable sensor schedule $\xi$ can indeed make a difference in terms of the utility gained. Here we choose $U_1(p) = -D_{\mathrm{KL}}(q_T\mid\mid q_0),$ which measures the information gain about the unknown parameter $x_0$ after assimilating the full data $Z(t)$ governed by \eqref{eq:data_guiding}, via solving the Zakai equation \eqref{eq:logzakai_guiding}. Since we are still yet to develop the machinery for optimising $U_1$ in the next section, we just compare four different choices of $\xi$, where $\xi_i = \cN(1.5 + i, 0.5)$, for $i=0,1,2,3$. The top left figure shows these four candidate sensor schedules $\xi_i$, with the resulting observation paths shown in the top right figure. We can clearly see how the paths of $Z(t)$ have ``increased activity'' in areas where $\xi_i$ supports probability mass. If we solve the log-Zakai equations for each of these observational paths, we get the corresponding final-time filtering distributions $p_T$ (lower left figure). We can see that setting $0$ and $3$ (blue and red) have relatively little posterior contraction (as compared to the prior distribution shown with a dashed line). Settings $1$ and $2$ (yellow and green) seem to have collected more information from the data assimilation procedure. This is summarised in a plot of the value utility function for these four settings, which shows that these settings indeed lead to the largest KL-divergence among the one compared.
\includegraphics[width=0.5\linewidth]{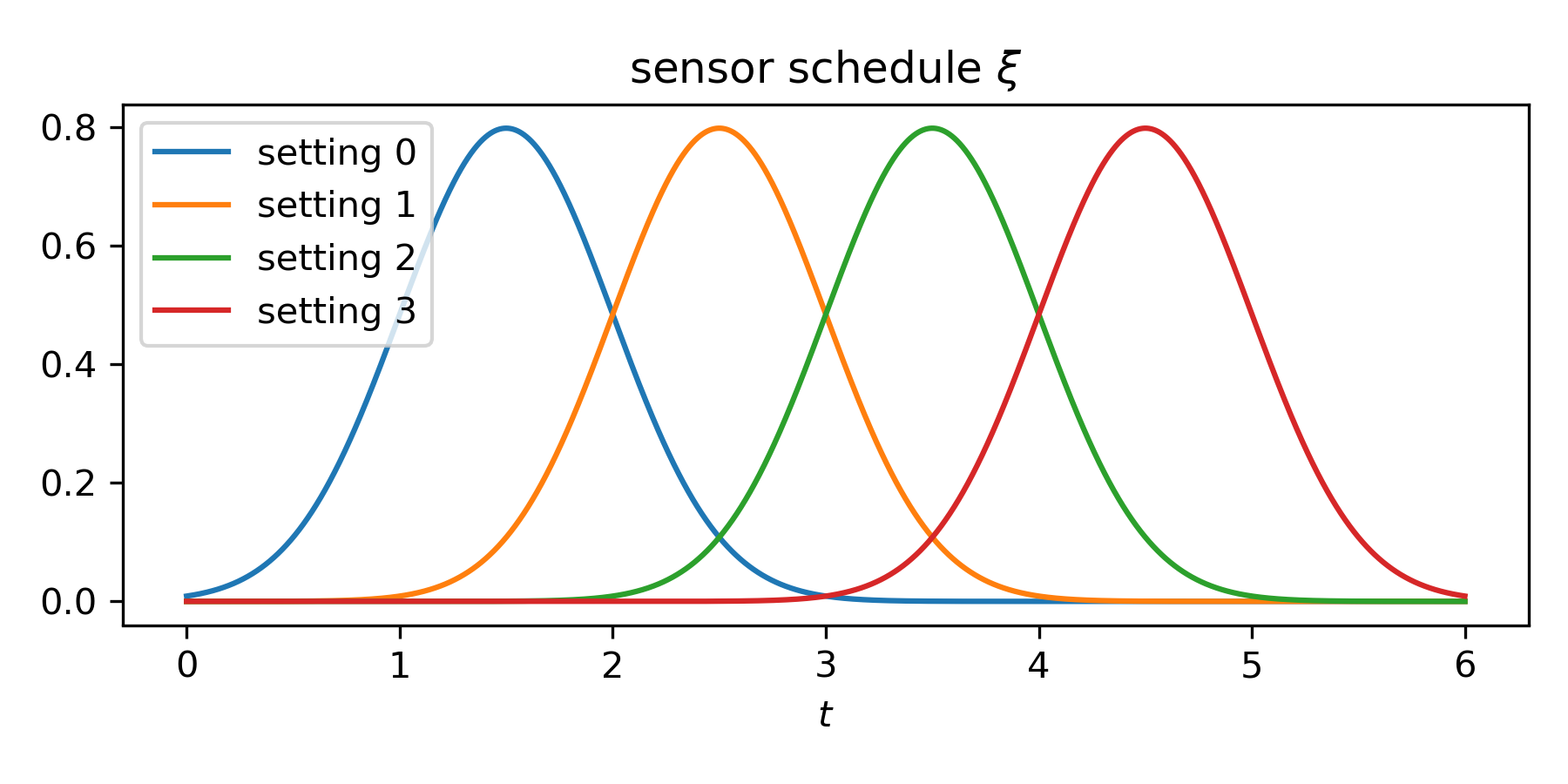}%
\includegraphics[width=0.5\linewidth]{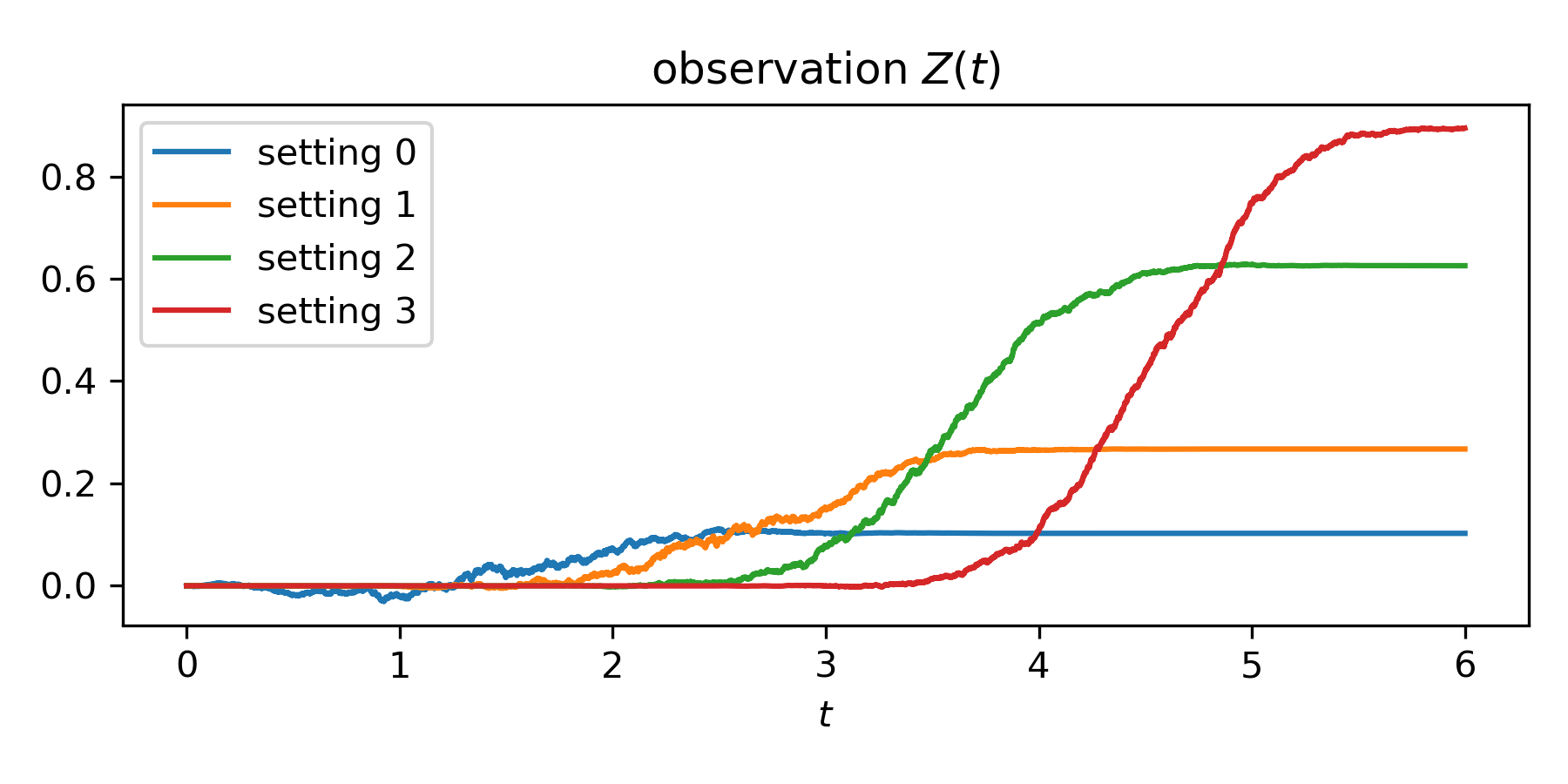}\\
\includegraphics[width=0.5\linewidth]{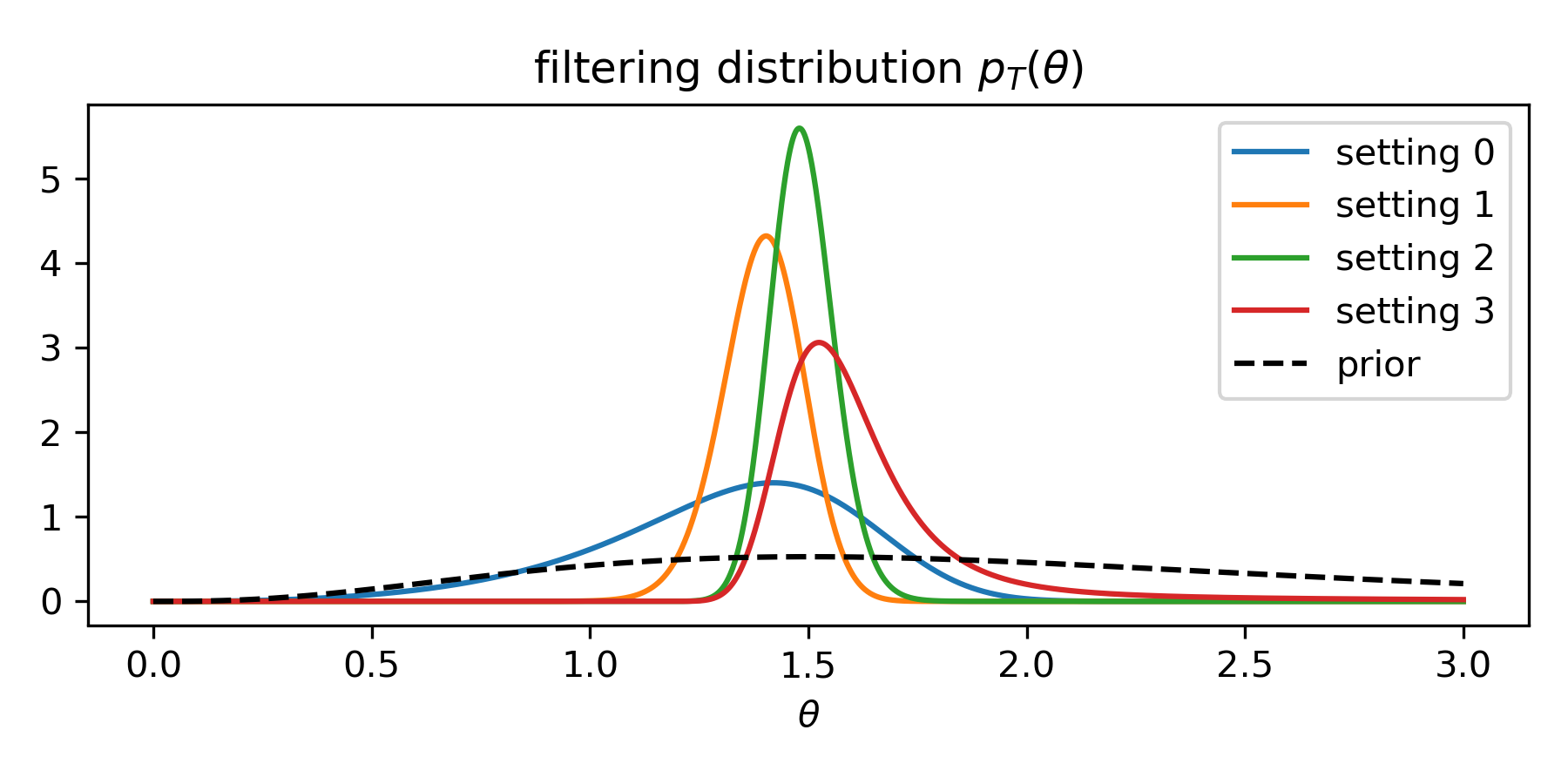}%
\includegraphics[width=0.5\linewidth]{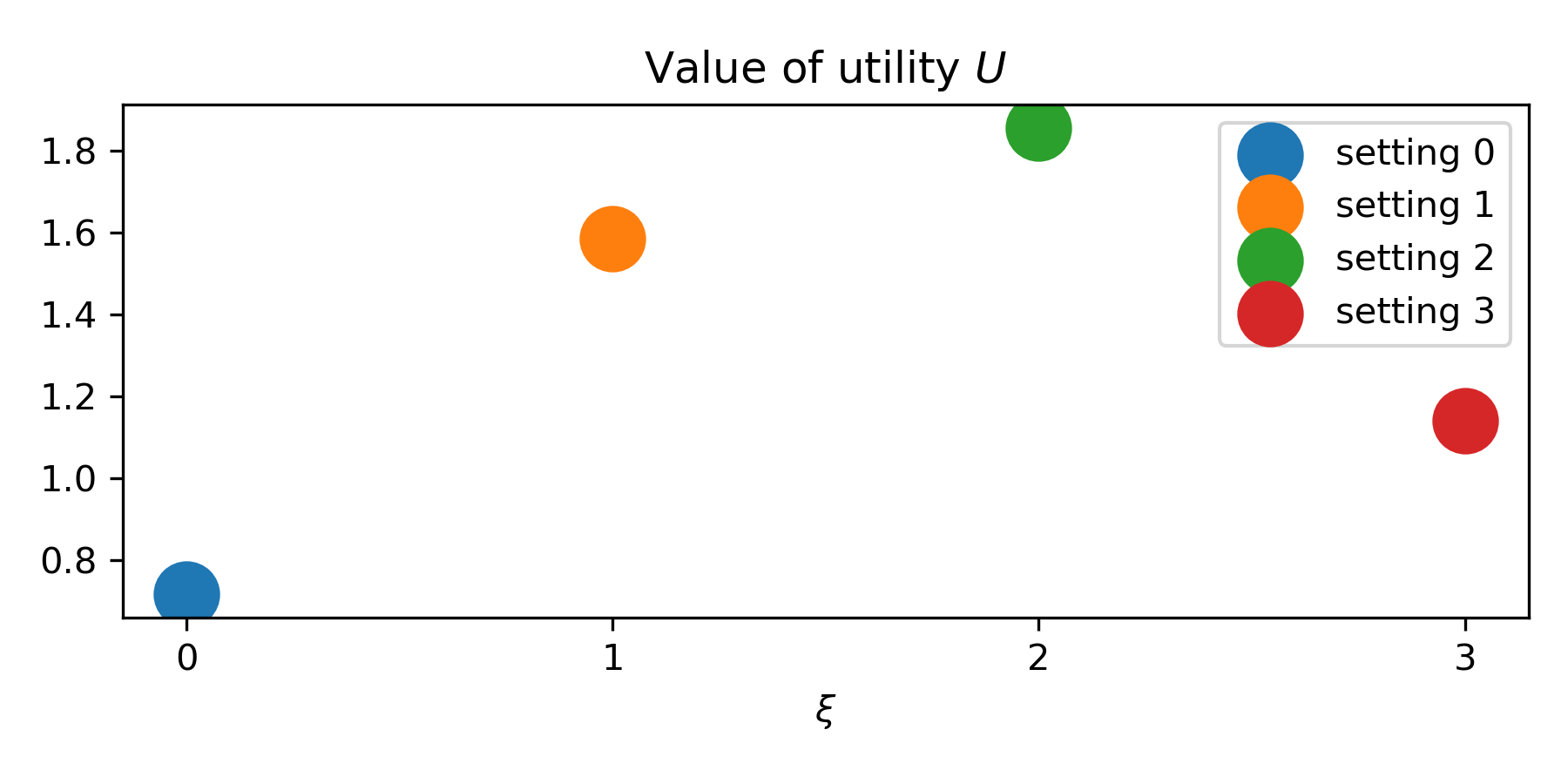}
We note that the specific outcomes for $Z(t),p_T,$ and the value of the utility are path-dependent, and the utility would strictly speaking need to be averaged over many such runs so we get a better (lower variance) estimate for $\E U_1(p)$, but we forgo this for the sake of simplicity.
}

We now consider the special case where the system dynamics and the observation operator are linear, and all errors of stochasticity are Gaussian. This yields much simpler weighted Kalman-Bucy equations.
\subsection{Linear Gaussian case}
If the signal has linear and Gaussian dynamics, and the observations are of the form \eqref{eq:filtering_cts}, with $g(x)=Hx$ for a matrix $H$,
\begin{equation}\label{eq:linear_Gauss_weighted}
\begin{split}
    \dd X(t) &= L(t) X(t)\dd t + \Sigma^{1/2}(t)\dd B_t\\
    \dd Z(t) &= \xi(t)H X(t)\dd t + \sqrt{\xi(t)} \Gamma^{1/2}(t)\dd W_t,
\end{split}
\end{equation}
for $t\in [0,T]$, then $X(t)|\{Z(s)\}_{s\leq t}$ follows a Gaussian distribution $\cN(m(t), C(t))$ for all $t$, with corresponding mean $m(t)$ and covariance $C(t)$ evolving according to the Kalman-Bucy equations
\begin{equation}\label{eq:Kalman_Bucy_weighted}
\begin{split}
    \dd m(t) &= L(t) m(t) \dd t+ C(t) H^\top\Gamma(t)^{-1}\left(\dd Z(t)- Hm(t)\dd \xi(t) \right)\\
    \frac{\dd C_\xi(t)}{\dd t} &=L(t) C_\xi(t) + C_\xi(t) L(t)^\top + \Sigma(t) - \xi(t) C_\xi(t) H^\top \Gamma^{-1}(t) H C_\xi(t)
\end{split}
\end{equation}

We now proceed in setting up the OED problem of optimal sensor scheduling. For simplicity, we consider the case where the utility is a function of the filtering covariance alone, and where $\xi$ is absolutely continuous with density, in which case we want to solve
\begin{align*}
    &\min_{\xi \in \cP} U_2(C_\xi)
\end{align*}

From now on we will suppress the dependence of $C_\xi$ on $\xi$ notationally.

We want to note here that in this specific case, the utility is a deterministic quantity in terms of $\xi$, independent of the actual observational data (since the Riccati equation governing $C$ does not react to neither the actual underlying path of the latent state $X(t)$, nor the observation $\dd z(t)$), so there is no need to take the expectation over possible realisations of this data.

A suitable utility function here arises from $U_2(C) = \tr(C(T))$, so the optimisation problem is aiming towards minimising the variance of the Gaussian filtering distribution at the final time $T$.

\section{Solving the OED problem using adjoint derivatives}\label{sec:solvingOED}
We have now formulated the OED problem of finding the best sensor schedule $\xi$. What remains to be done is to derive an expression for the gradient of the utility function with respect to $\xi$, so we can set up efficient numerical methods. We will do this now, with the help of the machinery of ``adjoints'', well-known in the optimal control community \cite{giles2002adjoint,giles2006smoking}. A recapitulation of this concept can be found in appendix \ref{sec:recap_adjoint}, and now we will begin with the linear and Gaussian case, and then derive the gradient in the more general nonlinear case afterwards. 
\subsection{Linear Gaussian case}
\begin{lem}\label{lem:gradient_matrix}
We consider a utility $U_2(C) = U_\mathrm{final}(C(T)) + \int_0^T U_\mathrm{int}(C(t))\dd t$ for some smooth functions $U_\mathrm{final},U_\mathrm{int}$, where 
\begin{align}\label{eq:C_ODE_adjoint}
    \dot C(t) = f(C(t), \xi(t), t),\qquad C(0) = C_0
\end{align}
is a matrix-valued ODE. Then the gradient of $U_2$ with respect to $\xi$ can be evaluated via $\DD_\xi U_2(\xi) = \eta$ where
\begin{align*}
    \eta(t) = (\partial_\xi f(C(t),\xi(t),t))^\star \Lambda(t)
\end{align*}
and
\begin{align*}
    -\dot \Lambda(t) &= (\partial_C f(C(t),\xi(t),t))^\star \Lambda(t) + \partial_C U_\mathrm{int}(t,C(t)), &\Lambda(T) &= \partial_C U_\mathrm{final}(C(T))
\end{align*}
\end{lem}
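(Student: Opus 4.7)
The plan is to use the classical adjoint (Lagrangian multiplier) method from optimal control. I would introduce the Lagrangian
\begin{align*}
\mathcal{L}(\xi, C, \Lambda) \;=\; U_\mathrm{final}(C(T)) \;+\; \int_0^T U_\mathrm{int}(C(t))\dd t \;-\; \int_0^T \llangle \Lambda(t),\, \dot C(t) - f(C(t),\xi(t),t)\rrangle \dd t,
\end{align*}
where $\Lambda(t)\in\cM$ is a time-dependent Lagrange multiplier in the Frobenius pairing. Whenever $C=C_\xi$ satisfies the constraint \eqref{eq:C_ODE_adjoint}, the final integral vanishes and $\mathcal{L}(\xi, C_\xi, \Lambda) = U_2(C_\xi)$ for \emph{any} choice of $\Lambda$. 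This freedom in $\Lambda$ is what I will exploit in order to eliminate the implicit dependence of $C$ on $\xi$.

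Next I would perturb $\xi \mapsto \xi + \varepsilon h$ and write down the induced sensitivity $\delta C := \DD_\xi C_\xi[h]$, which satisfies the linearised (forward) equation
\begin{align*}
\dot{\delta C}(t) = \partial_C f(C(t),\xi(t),t)\, \delta C(t) + \partial_\xi f(C(t),\xi(t),t)\, h(t), \qquad \delta C(0)=0,
\end{align*}
since the initial condition $C_0$ is fixed. Differentiating $\mathcal{L}$ in $\xi$ (taking $C = C_\xi$, $\Lambda$ arbitrary) and collecting terms gives
\begin{align*}
\DD_\xi U_2[h] = \llangle \partial_C U_\mathrm{final}(C(T)), \delta C(T)\rrangle + \int_0^T \llangle \partial_C U_\mathrm{int}(C(t)),\, \delta C(t)\rrangle \dd t - \int_0^T \llangle \Lambda, \dot{\delta C} - \partial_C f\,\delta C - \partial_\xi f\, h\rrangle \dd t.
\end{align*}

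Then I would integrate by parts the $\int \llangle \Lambda,\dot{\delta C}\rrangle \dd t$ term, using $\delta C(0)=0$ to kill the lower boundary term. Grouping coefficients of $\delta C(T)$ and of $\delta C(t)$ on $(0,T)$ yields, respectively, $\llangle \partial_C U_\mathrm{final}(C(T)) - \Lambda(T), \delta C(T)\rrangle$ and $\int_0^T \llangle \partial_C U_\mathrm{int}(C(t)) + \dot\Lambda(t) + (\partial_C f)^\star \Lambda(t),\, \delta C(t)\rrangle \dd t$. Choosing $\Lambda$ to solve the backward adjoint ODE with terminal condition $\Lambda(T) = \partial_C U_\mathrm{final}(C(T))$ makes both of these vanish, leaving
\begin{align*}
\DD_\xi U_2[h] = \int_0^T \llangle \Lambda(t),\, \partial_\xi f(C(t),\xi(t),t)\, h(t)\rrangle \dd t = \int_0^T \bigl((\partial_\xi f)^\star \Lambda(t)\bigr)\, h(t)\,\dd t,
\end{align*}
which identifies the gradient density $\eta(t) = (\partial_\xi f(C(t),\xi(t),t))^\star \Lambda(t)$ as claimed.

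The main technical obstacle that I would expect is not the formal manipulation above but the functional-analytic bookkeeping: one must fix Banach spaces in which $\xi \mapsto C_\xi$ is Fréchet differentiable (which for the Riccati right-hand side $f$ follows from a standard Gr\"onwall argument applied to the linearised equation), and interpret $\partial_\xi f$ as a linear map from scalar perturbations of the density $\xi(t)$ into $\cM$, so that its Frobenius adjoint sends $\Lambda(t)\in\cM$ back into $\R$ as expected. A secondary subtlety, deliberately sidestepped by stating the result as an unconstrained Fr\'echet derivative, is that $\xi$ is constrained to be a probability density; projecting $\eta$ onto the tangent cone of this constraint set (or equipping the ambient space with a non-Euclidean geometry) is needed before feeding $\eta$ into an optimisation routine, but this is a matter of turning the derivative into a gradient rather than part of the statement itself.
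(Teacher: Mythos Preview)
Your proposal is correct and follows essentially the same adjoint/Lagrangian argument as the paper: introduce a multiplier $\Lambda(t)$ paired with the ODE constraint, differentiate in $\xi$, integrate by parts in $t$ (using $\delta C(0)=0$), and choose $\Lambda$ to satisfy the backward equation with terminal condition $\Lambda(T)=\partial_C U_\mathrm{final}(C(T))$ so that only the $\partial_\xi f$ term survives. The only cosmetic difference is that the paper carries the Lagrange multipliers for the probability constraints ($\int\xi=1$, $\xi\ge 0$) and the initial condition through the computation, whereas you correctly relegate these to the post-processing step of turning the Fr\'echet derivative into a constrained gradient.
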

\begin{proof}
We set $H(\dot C, C, \xi, t) = \dot C(t) - f(C(t), \xi(t), t)$ (which means that $H = 0$ characterises solutions $C$ of \eqref{eq:C_ODE_adjoint}). This defines a Lagrangian 
\begin{align*}
    \mathbf L(\dot C, C, \xi, \Lambda, \Omega) &= U_\mathrm{final}(C(T)) + \int_0^T U_\mathrm{int}(t,C(t))\dd t \\
    &\quad- \int_0^T \left\llangle \Lambda(t), H(\dot C, C, \xi, t)\right\rrangle \dd t - \llangle \Omega, C(0)-C_0\rrangle - \nu\cdot \left(\int_0^T \xi(t)\dd t - 1\right)- \int_0^T \mu(t)\xi(t)\dd t ,
\end{align*} for a non-negative Lagrange multiplier $\mu$, 

where $\Lambda\in \cC([0,T],\cM)$ and $\Omega\in\cM$ are matrix-valued Lagrangian multipliers and $\nu$ is a real-valued Lagrangian multiplier. Note that the inner product here is $\llangle M,N\rrangle = \sum_{ij}M_{ij}N_{ij} = \tr(M^\top N)$. Now we can compute the Fr\'echet derivative of the Lagrangian with respect to the design $\xi$ as (we write $H(\dot C, C, \xi, t) = H$ for convenience)
\begin{align*}
    \DD_\xi &\mathbf L(\dot C, C, \xi, \Lambda, \Omega)[h] = \llangle\DD_C U_\mathrm{final}(C(T)), \DD_\xi C(T)[h]\rrangle +\int_0^T \llangle\DD_C U_\mathrm{int}(t,C(t)), \DD_\xi C(t)[h]\rrangle \ddt  \\
    &- \int_0^T \left\llangle \Lambda(t), \partial_{\dot C}H \left[\DD_\xi \dot C(t)[h]\right] + \partial_C H \left[\DD_\xi C(t)[h]\right] + \partial_\xi H[h]\right\rrangle\dd t \\&- \langle \Omega, \DD_\xi C(0)[h]\rangle - \nu \int_0^T h(t)\dd t - \int_0^T \mu(t)h(t)\dd t
    \intertext{Now we use the fact that $\partial_{\dot C}H = I$, and $\partial_CH = -\partial_C f,$ and $\partial_\xi H = -\partial_\xi f$:}
    &=  \llangle\DD_C U_\mathrm{final}(C(T)), \DD_\xi C(T)[h]\rrangle +\int_0^T \llangle\DD_C U_\mathrm{int}(t,C(t)), \DD_\xi C(t)[h]\rrangle \ddt  - \int_0^T \llangle \Lambda(t), \DD_\xi \frac{\dd}{\dd t} C(t)[h]\rrangle\dd t \\
    &+ \int_0^T \left\llangle \Lambda(t),  \partial_C f\left[ \DD_\xi C(t)[h]\right] + \partial_\xi f[h]\right\rrangle\dd t - \llangle \Omega, \DD_\xi C(0)[h]\rrangle- \int_0^T h(t)(\nu + \mu(t))\dd t
    \intertext{Integration by parts (with respect to $t$) on the second integral, and using $\DD_\xi C(0) = 0$ yields}
    &= \llangle\DD_C U_\mathrm{final}(C(T)), \DD_\xi C(T)[h]\rrangle +\int_0^T \llangle\DD_C U_\mathrm{int}(t,C(t)), \DD_\xi C(t)[h]\rrangle \ddt  - \left[\llangle\Lambda(t), \DD_\xi C(t)[h]\rrangle \right]_0^T\\
    &+\int_0^T \llangle \dot\Lambda(t), \DD_\xi C(t)[h]\rrangle + \left\llangle \Lambda(t),  \partial_C f\left[ \DD_\xi C(t)[h] \right]+ \partial_\xi f[h]\right\rrangle\dd t-  \int_0^T (\nu+\mu(t)) h(t)\dd t \\
    &= \llangle  \DD_C U_\mathrm{final}(C(T))-\Lambda(T), \DD_\xi C(T)[h]\rrangle + \int_0^T \llangle \dot\Lambda(t) + [\partial_C f]^\star \Lambda +\DD_C U_\mathrm{int}(t,C(t)), \DD_\xi C(t)[h]\rrangle \dd t \\
    &+\int_0^T \llangle \Lambda(t), \partial_\xi f[h]\rrangle \dd t-  \int_0^T (\nu+\mu(t)) h(t)\dd t 
\end{align*}
Now we set $\Lambda(T) = \DD_C U_\mathrm{final}(C(T))$, $-\dot \Lambda(t) = (\partial_C f)^\star \Lambda(t) + \DD_C U_\mathrm{int}(t,C(t)),$ and use the fact that $\partial_\xi f: T_\xi \cP\to \cM$, so $(\partial_\xi f)^\star: \cM^\star \to T_\xi^\star \cP$, and thus (since we can identify the Euclidean space $\cM$ with its dual)
\[
\int_0^T \llangle \Lambda(t), \partial_\xi f[h]\rrangle \dd t = \int_0^T  (\partial_\xi f)^\star[\Lambda(t)]\, h(t) \dd t
\]
which simplifies the Lagrangian derivative to $ \DD_\xi \mathbf L(\dot C, C, \xi, \Lambda, \Omega)[h] =   \int_0^T \left\{ (\partial_\xi f)^\star[\Lambda(t)] - \nu-\mu(t)\right\}\, h(t) \dd t$, i.e.,
\begin{align*}
    \DD_\xi &\mathbf L(\dot C, C, \xi, \Lambda, \Omega)=  (\partial_\xi f)^\star[\Lambda(t)] - \nu - \mu(t)
\end{align*}    
where $\nu,\mu(t)$ are chosen to satisfy the constraints on $\xi$ to be a valid probability density.

\end{proof}

\begin{lem}\label{lem:grad_lin_gauss}
    We consider the linear, Gaussian setup \eqref{eq:linear_Gauss_weighted} with Kalman-Bucy equations \eqref{eq:Kalman_Bucy_weighted} and set $U:\cM \to \R$ to be a utility function. We abbreviate $A(t) := H^\top\Gamma(t)^{-1}H$.  Then the gradient of $V(\xi):= U_\mathrm{final}(C(T)) + \int_0^T U_\mathrm{int}(C(t))\dd t$ is given by
    \begin{align*}
        \delta_\xi V(\xi)(t) &=-\llangle \Lambda(t), C(t)A(t)C(t)\rrangle
    \end{align*}
    and where
    \begin{align*}
    -\dot \Lambda(t) &= L^\top \Lambda(t) + \Lambda(t) L - \xi A(t)C(t)\Lambda(t) - \xi \Lambda C(t)A(t)+ \partial_C U_\mathrm{int}(t,C(t))\\
    \Lambda(T) &= \partial_C U_\mathrm{final}(C(T)).
\end{align*}
In particular:
\begin{itemize}
    \item If $U(M) = \tr(M)$, then $\Lambda(T)= I$.
    \item If $U(M) = \log\det(M)$, then $\Lambda(T) = 2C^{-1}(T) - C^{-1}(T)\circ I$.
\end{itemize}
\end{lem}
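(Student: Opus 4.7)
The plan is to specialise Lemma \ref{lem:gradient_matrix} to the Riccati-type ODE governing $C(t)$ in \eqref{eq:Kalman_Bucy_weighted}. First I would write the covariance equation in canonical form $\dot C = f(C,\xi,t)$ by setting
\[
f(C,\xi,t) = L(t)C + CL(t)^\top + \Sigma(t) - \xi\, C A(t) C,
\]
using the abbreviation $A(t) = H^\top\Gamma(t)^{-1}H$. The whole result then reduces to computing the two partial derivatives $\partial_\xi f$ and $\partial_C f$, taking their Frobenius adjoints, and substituting into Lemma \ref{lem:gradient_matrix}.

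The $\xi$-derivative is essentially free: $\partial_\xi f[h] = -h\, C A C$ is a linear map $\R \to \cM$, so its adjoint sends $\Lambda \in \cM$ to the scalar $-\llangle \Lambda, CAC\rrangle$, and this yields the claimed expression for $\delta_\xi V(\xi)(t)$ directly from $\eta(t) = (\partial_\xi f)^\star[\Lambda(t)]$.

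For the $C$-derivative, the variation in direction $K\in \cM$ is
\[
\partial_C f[K] = LK + KL^\top - \xi\,(KAC + CAK),
\]
and taking the Frobenius adjoint term-by-term uses the cyclic trace identity together with the symmetry of $A$ and of $C$ (the latter being preserved by the Riccati flow). A representative manipulation is $\llangle \Lambda, KAC\rrangle = \tr(AC\Lambda^\top K) = \llangle \Lambda CA, K\rrangle$; handling the other three terms analogously gives
\[
(\partial_C f)^\star \Lambda = L^\top \Lambda + \Lambda L - \xi\,(AC\Lambda + \Lambda CA),
\]
which together with the running-cost contribution $\partial_C U_{\mathrm{int}}(t,C(t))$ is exactly the backwards ODE for $\Lambda$ stated in the lemma.

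Finally I would specialise the terminal condition $\Lambda(T) = \partial_C U_{\mathrm{final}}(C(T))$ to the two utilities listed. For $U(M) = \tr M = \llangle I, M\rrangle$ this immediately gives $\Lambda(T) = I$. For $U(M) = \log\det M$, the identity $d\log\det M = \tr(M^{-1} dM)$ combined with the symmetry of $M$ (so that, parametrising symmetric matrices by their upper triangle, off-diagonal entries contribute twice while diagonal entries contribute once) produces the stated $2C^{-1}(T) - C^{-1}(T)\circ I$, where $\circ$ denotes the Hadamard product so that $C^{-1}\circ I$ is the diagonal part of $C^{-1}$. The only genuinely delicate step is this bookkeeping of the symmetry constraint when differentiating on $\cM^+$; everything else is a straightforward specialisation of Lemma \ref{lem:gradient_matrix}.
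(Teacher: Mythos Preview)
Your proposal is correct and follows essentially the same route as the paper: both specialise Lemma~\ref{lem:gradient_matrix} to the Riccati drift $f(C,\xi,t)=LC+CL^\top+\Sigma-\xi\,CAC$, compute $\partial_\xi f$ and $\partial_C f$, and take their Frobenius adjoints. The only cosmetic difference is that the paper carries out the adjoint computation in index notation (writing out $\partial f_{kl}/\partial C_{ij}$ and summing against $\Lambda_{kl}$), whereas you use coordinate-free trace manipulations; the two are equivalent and arrive at the same backward equation and gradient expression.
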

\begin{proof}
In this proof, we will sometimes drop dependency on $t$ notationally if convenient. Since $f(\dot C, C, \xi, t) = LC + CL^\top + \Sigma - \xi(t) CAC$, we have
\begin{align*}
    \frac{\partial f_{kl}}{\partial C_{ij}} &= L_{ki}\delta_{lj} + \delta_{ik}L_{lj} - \xi(t)\delta_{lj}(CA)_{ki} - \xi(t)\delta_{ki}(AC)_{jl}\\
    \frac{\partial f_{kl}}{\partial\xi} &= (-C(t)AC(t))_{kl}
\end{align*}
Then from lemma \ref{lem:gradient_matrix},
\begin{align*}
    \llangle \Lambda, \partial_C f M\rrangle &= \sum_{kl} \Lambda_{kl} \sum_{ij}\frac{\partial f_{kl}}{\partial C_{ij}} M_{ij} = \sum_{ij}M_{ij}\underbrace{\sum_{kl}  \frac{\partial f_{kl}}{\partial C_{ij}}\Lambda_{kl}}_{[(\partial_C f)^\star(\Lambda)]_{ij}} = \llangle (\partial_C f)^\star(\Lambda), M\rrangle,
\end{align*}
which leads to 
\begin{align*}
    -\dot \Lambda_{ij}(t) &= \sum_{kl}  \frac{\partial f_{kl}}{\partial C_{ij}}\Lambda_{kl} + (\partial_C U_\mathrm{int}(t,C(t)))_{ij}= \sum_{kl}\left[L_{ki}\delta_{lj} + \delta_{ik}L_{lj} - \xi\delta_{lj}(CA)_{ki} - \xi\delta_{ki}(AC)_{jl}\right]\Lambda_{kl} + (\partial_C U_\mathrm{int}(t,C(t)))_{ij}\\
    &= \sum_k L^\top_{ik}\Lambda_{kj} + \sum_l \Lambda_{il}L_{lj} - \sum_k (CA)^\top_{ik}\Lambda_{kj}- \sum_l \Lambda_{il}(AC)^\top_{lj} + (\partial_C U_\mathrm{int}(t,C(t)))_{ij}\\
    &= \left[L^\top \Lambda +\Lambda L - \xi(CA)^\top \Lambda - \xi\Lambda(AC)^\top\right]_{ij} + (\partial_C U_\mathrm{int}(t,C(t)))_{ij}
\end{align*}
Which means that (using $A=A^\top,C=C^\top$),
\begin{align*}
    -\dot \Lambda = L^\top \Lambda + \Lambda L - \xi AC\Lambda - \xi \Lambda CA + \partial_C U_\mathrm{int}(t,C(t))
\end{align*}

The gradient of $V(\xi)$ is then given by $\DD_\xi V(\xi)[h] = \int_0^T \llangle \Lambda(t), C(t)A(t)C(t)\rrangle  h(t)\dd t,$ so 
\begin{equation*}
    \DD_\xi V(\xi)(t) =  -\llangle \Lambda(t), C(t)A(t)C(t)\rrangle,
\end{equation*}

 where $\Lambda_T$ depends on the specific utility, see lemma \ref{lem:matrix_derivatives}.

\end{proof}
\subsection{Nonlinear case}
For simplicity, we are going to focus on the case where the hidden signal $X(t)$ is a diffusion process. The results in this sections can also be extended to the case where $X(t)$ is driven by a more general infinitesimal generator.
\begin{lem}\label{sec:main_lemma}
    We assume that the hidden signal is a stochastic process $\{X(t)\}_{t\in[0,T]}$ of form
    \begin{align*}
        \dd X(t) = f(X(t))\dd t + \Sigma^{1/2} \dd B_t 
    \end{align*}
    with infinitesimal generator $(\cL p)(x) = f(x) \cdot \nabla p(x) + \frac{1}{2}\tr[\Sigma \nabla^2 p(x)]$, and that data is generated via an observation mapping $g: X\to Y$ and a probability measure $\xi\in\cP$ (the sensor schedule with density $\xi(t)$) as
\begin{equation}
    \dd Z(t) = g(X(t))\xi(t)\dd t + \sqrt{\xi(t)}\,\Gamma^{1/2}(t)\dd W_t.
\end{equation}

We further assume that we have a utility which is a function of the posterior distribution after assimilation of all the data, i.e.
\begin{equation}
    V(\xi) = \E U(p),
\end{equation}
where $p_t$ is the filtering distribution of $X(t)|\{Z(s)\}_{s=0}^t$ given by the log-Zakai equation
\begin{align*}
    \dd \log p_t(x) &= \frac{\cL^\star  p_t(x)}{p_t(x)}\dd t - \frac{1}{2}\left\|g(x)\right\|_{\Gamma(t)}\dd\xi(t)+g(x)^\top \Gamma^{-1}(t)\dd  Z(t)
\end{align*}
and the path-dependent utility is a generic integral operator of both final filtering distribution and its path,
\begin{align*}
    U(p) = U_\mathrm{final}(p_T) + \int_0^T U_\mathrm{int}(p_s)\dd s = \int u_\mathrm{final}(x,p_T(x))\dd x +  \int_0^T \int u_\mathrm{int}(x,(p_s(x))\dd x.  
\end{align*}
Then the Fr\'echet derivative of $V(\xi)= \E U(p)$ with respect to $\xi$ is given by $\DD_\xi V(\xi) = \eta,$ with 
    \begin{equation} \label{eq:frechet_der}
        \eta(t) = \E\int   \lambda_t(x)  \left( - g(x)^\top \Gamma^{-1} g(X(t))    +\frac{1}{2} \| g(x)\|^2_{\Gamma^{-1}} \right) \dd x
    \end{equation}

    and where (pointwise for a specific path realisation leading to a filtering distribution $p_t$)
\begin{align}\label{eq:bwd_KS}
    -\frac{\dd\lambda_t(x)}{\dd t} &= -p_t(x) D_{p_t}u_\mathrm{int}(x,p_t(x)) -\nabla \cdot \left(\lambda_t(x) [f(x) -\Sigma \nabla \log p_t(x)] + \frac{1}{2} \Sigma \nabla \lambda_t(x) \right)\\
    \lambda_T(x) &= p_T(x) D_{p_T}u_\mathrm{final}(x,p_T(x))
\end{align}

\end{lem}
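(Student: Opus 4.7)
The plan is to mirror the Lagrangian calculation from the proof of Lemma~\ref{lem:gradient_matrix}, replacing the finite-dimensional ODE constraint $\dot C = f(C,\xi,t)$ by the log-Zakai SPDE~\eqref{eq:log-Zakai} and the state $C(t)\in\cM$ by the log-density $\ell_t(\cdot) = \log p_t(\cdot)$, so that the utility terms get recast in $\ell$ via $D_p U\,\mapsto\, p\, D_p U$ when differentiated in the $\ell$-direction. Because the forward pass and the utility evaluate pathwise, I would first derive a pathwise Fr\'echet derivative and take the expectation over $(X,W)$ only at the end; this matches the statement that $\lambda_t$ solves a backward equation pointwise for each realisation.

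Concretely, I introduce a multiplier $\lambda_t(x)$ adjoint to $\ell_t(x)$, a multiplier $\Omega(x)$ enforcing the initial condition $\ell_0=\log p_0$, and scalar multipliers $\nu,\mu(t)$ enforcing the probability-density constraints on $\xi$, and form
\begin{align*}
\mathbf{L}(\ell,\xi,\lambda,\Omega) &= U_\mathrm{final}(e^{\ell_T}) + \int_0^T U_\mathrm{int}(e^{\ell_t})\,\dd t - \int \Omega(x)[\ell_0(x)-\log p_0(x)]\,\dd x \\
&\quad - \int_0^T\!\!\int \lambda_t(x)\,H(\ell,\xi,x,t)\,\dd x\,\dd t - \nu\Bigl(\int_0^T\xi(t)\,\dd t - 1\Bigr) - \int_0^T\mu(t)\xi(t)\,\dd t,
\end{align*}
where $H(\ell,\xi,x,t)$ is the residual of the log-Zakai equation after substituting $\dd Z(t) = g(X(t))\xi(t)\dd t + \sqrt{\xi(t)}\,\Gamma^{1/2}(t)\dd W_t$, so that $H$ is explicit in $\xi$ and the hidden state $X(t)$. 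Taking $\DD_\xi$ in direction $h$, the utility contributes $\int_0^T\int p_t(x)\, D_{p_t} u_\mathrm{int}(x,p_t(x))\cdot \DD_\xi\ell_t(x)[h]\,\dd x\,\dd t$ plus a terminal contribution, while the constraint splits into a direct $\xi$-term $\int_0^T\int \lambda_t(x)\bigl(-\tfrac{1}{2}\|g(x)\|^2_{\Gamma^{-1}} + g(x)^\top\Gamma^{-1}g(X(t))\bigr)h(t)\,\dd x\,\dd t$ (plus a $\dd W$-martingale that vanishes under $\E$) and an indirect term $\int_0^T\int \lambda_t(x)\,[\partial_\ell H]\cdot \DD_\xi\ell_t(x)[h]\,\dd x\,\dd t$.

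The next step is It\^o integration by parts in $t$ on the $\dd\ell_t$ contribution (shifting $\partial_t$ from $\DD_\xi\ell_t[h]$ onto $\lambda_t$ and generating endpoint terms at $t=0,T$), followed by spatial integration by parts against the linearised operator
\begin{align*}
[\partial_\ell(\cL^\star p/p)]\cdot h = -[f - \Sigma\nabla\log p]\cdot\nabla h + \tfrac{1}{2}\tr(\Sigma\nabla^2 h),
\end{align*}
whose $L^2(\R^n)$-adjoint is precisely the divergence-form operator $\nabla\cdot\bigl(\lambda[f-\Sigma\nabla\log p] + \tfrac{1}{2}\Sigma\nabla\lambda\bigr)$ appearing in~\eqref{eq:bwd_KS}. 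Declaring $\lambda_t$ to solve the resulting backward PDE with terminal condition $\lambda_T = p_T\,D_{p_T}u_\mathrm{final}$ annihilates every $\DD_\xi\ell_t[h]$ term (and $\DD_\xi\ell_0=0$ kills the $t=0$ boundary term via the $\Omega$-constraint); what remains, after taking $\E$ over $(X,W)$, is the claimed formula~\eqref{eq:frechet_der}, up to the Lagrange multipliers $\nu+\mu(t)$ that encode the probability-measure condition on $\xi$.

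The principal obstacle is the nonlinearity of $p\mapsto \cL^\star p/p$: its linearisation depends on the forward path $p_t$, so the adjoint drift carries the solution-dependent term $\nabla\log p_t$, and the backward PDE is genuinely coupled to the forward pass. A secondary difficulty is giving a clean meaning to the It\^o integration by parts against $\dd Z$ while keeping $\lambda_t$ adapted to the observation filtration; a rigorous treatment would require a backward stochastic PDE. Following the formal-calculation convention stated in the introduction, I would absorb the $\dd Z$ term into a drift-plus-martingale decomposition and let the martingale vanish under $\E$ at the end, leaving the pointwise-in-$\omega$ backward PDE~\eqref{eq:bwd_KS}.
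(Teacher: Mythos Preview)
Your proposal is correct and follows the same Lagrangian--adjoint strategy as the paper: enforce the log-Zakai equation as a constraint with multiplier $\lambda_t$, integrate by parts stochastically in $t$ and in $x$, and choose $\lambda_t$ so that all $\DD_\xi\ell_t[h]$ terms vanish, leaving only the explicit $\xi$-dependence of the observation process. Your decision to linearise $\cL^\star p/p$ directly in the variable $\ell=\log p$ is a touch more economical than the paper, which instead linearises in $p$, passes to $\cL(\lambda_t/p_t)$ by duality, and only afterwards simplifies the combination $-p_t\,\cL(\lambda_t/p_t)+\lambda_t\,\cL^\star p_t/p_t$ via the identity $p\nabla^2\mu-\mu\nabla^2 p=\nabla\cdot(p\nabla\mu-\mu\nabla p)$; the two computations are equivalent and yield the same backward operator.
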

\begin{rem}
    If we set $u_\mathrm{final}(x,p_s(x)) = q_s(x)\log \frac{q_s(x)}{q_0(x)}$, then $U(p) = D_{\mathrm{KL}}(q_T\mid\mid q_0)$ (recall that $q_t$ is the normalised version of $p_t$).
\end{rem}
\begin{rem}
    Using the log-Zakai (instead of the more common Zakai equation, or even the Kushner-Stratonovich equation) has two main benefits: First, we found the log-Zakai equation to be more suitable in our context since we don't need to numerically enforce non-negativity of $p_t$ by working with the log-density directly. Second, if we use the (non-log) Zakai equation, then the (backwards) adjoint equation becomes a stochastically driven semilinear parabolic PDE (like the one for $p_t$ itself) of form
    \begin{equation*}
        \text{adjoint from non-log Zakai:} \quad-\dd \lambda_t(x) = \cL \lambda_t(x)\dd t + p_t(x)\lambda_t(x) g(x)^\top \Gamma(t)^{-1}\dd Z(t),
    \end{equation*}
    while \eqref{eq:bwd_KS} leads to a smooth solution since there is no direct influence of the stochastic forcing $\dd Z(t)$, only via its influence on the filtering distribution $p_t$.
\end{rem}
\begin{proof}
The Lagrangian\footnote{We drop Lagrangian multipliers for nonnegativity and normalisation for simplicity, keeping in mind that the gradient has to be chosen to respect these constraints.} is then (where in the below, $p$ means the entire trajectory $\{p_t\}_{t \in [0,T]}$),
\begin{align*}
    \mathbf L_T(p) = \E U(p) - \E \int_0^T \int \lambda_t(x) \left(\dd  \log p_t(x) - \frac{\cL^\ast p_t(x)}{p_t(x)} \dd t -  g(x)^\top \Gamma^{-1} \dd Z(t) +\frac{1}{2}\xi(t) \| g\|^2_{\Gamma^{-1}}\dd t \right) \dd x
\end{align*}
This means that
\begin{align*}
    D_\xi \mathbf L(p)[h] &=  \E D_p U(p) [D_\xi p [h]] - \E \int_0^T \int \lambda_t(x) \left( D_\xi (\dd \log p_t(x))[h]  - \frac{\cL^\ast D_\xi p_t(x)[h]}{p_t(x)} + \frac{\cL^\ast p_t(x)}{p_t(x)^{2}} D_\xi p_t(x) \dd t \right)\dd x \\
    & - \E \int_0^T\int \lambda_t(x)  \left( - g(x)^\top \Gamma^{-1} \DD_\xi\dd Z(t)  +\frac{1}{2} h(t)\| g(x)\|^2_{\Gamma^{-1}}dt \right) \dd x \\
     \intertext{We perform stochastic integration by parts with respect to time on the integral $\int_0^T \int \lambda_t(x) D_\xi (\dd\log p_t(x))[h]$ (yielding an additional It\=o correction term containing the quadratic covariation between $\lambda_t$ and $D_\xi\log p_t$), and compute the Fr\'echet derivative of the observation path with respect to $\xi$, leading to}
     &= \E D_p U(p) [D_\xi p [h]] - \E \int_0^T \int \lambda_t(x) \left(   - \frac{\cL^\ast D_\xi p_t(x)[h]}{p_t(x)} + \frac{\cL^\ast p_t(x)}{p_t(x)^{2}} D_\xi p_t(x) \dd t \right)\dd x \\
    & - \E \int_0^T\int\lambda_t(x)  \left( - g(x)^\top \Gamma^{-1} \left(g(X(t))h(t)\dd t + \frac{1}{2}h(t) \xi(t)^{-1/2} \Gamma^{1/2} \dd W_t \right)  +\frac{1}{2} h(t)\| g(x)\|^2_{\Gamma^{-1}}dt \right) \dd x \\
    & -\E \int \left( \lambda_T D_\xi  \log p_T[h] - \int_0^T \dd\lambda_t D_\xi \log p_t(x)[h] dt -\frac{1}{2} d\lambda_t d(D_\xi \log p_t[h])   \right) \dd x\\
    \intertext{We now make the ansatz $d\lambda_t = \phi(x, \xi(t)) dW_t + \psi(x, \xi(t))dt $ with $\phi$ and $\psi$ yet to be determined. At the same time, we know that (from the Zakai equation) $d D_\xi \log p_t [h]= \text{[drift terms]}+ g(x)^\top \Gamma^{-1} \frac{h(t)}{2\xi^{1/2}(t)} \Gamma^{1/2} \dd W_t$, so that we get the following general form of the quadratic covariation term: $d\lambda_t d(D_\xi \log p_t[h]) = \frac{g(x)^\top \Gamma^{-1}(t)\phi(x,\xi(t))h(t) }{2\xi^{1/2}(t)}\dd t$. In addition, we recall that $U(p) = U_\mathrm{final}(p_T) + \int_0^T U_\mathrm{int}(p_s)\dd s = \int u_\mathrm{final}(x,p_T(x)))\dd x +  \int_0^T \int u_\mathrm{int}(x,(p_s(x)))\dd x$, and use the adjoint operator to simplify the expression $-\int \lambda_t(x) \frac{\cL^\star D_\xi p_t(x)[h]}{p_t(x)}\dd t = -\int \cL\left(\frac{\lambda_t}{p_t} \right)(x)D_\xi p_t(x)[h]\dd t$, so}
    &= \E \int D_{p_T}u_\mathrm{final}(x,p_T(x))  D_\xi p_T [h](x)\dd x + \E \int_0^T\int D_{p_t}u_\mathrm{int}(x,p_t(x))[D_\xi p_t[h](x)]\dd x\dd t\\
    &- \E \int_0^T \int \left[ -\mathcal{L} \left( \frac{\lambda_t}{p_t} \right)(x)   
+\frac{\lambda_t(x)}{p_t(x)} \left(     \frac{\cL^\ast p_t(x)}{p_t(x)}  \right)\right]D_\xi p_t(x)  \dd t \\
    & - \E \int_0^T\int\lambda_t(x)  \left( - g(x)^\top \Gamma^{-1} \left(g(X(t))h(t)\dd t + \frac{1}{2}h(t) \xi(t)^{-1/2} \Gamma^{1/2} \dd W_t \right)  +\frac{1}{2} h(t)\| g(x)\|^2_{\Gamma^{-1}}dt \right) \dd x \\
    & - \E \int\left( \lambda_T \frac{D_\xi   p_T[h]}{p_T(x)} - \int_0^T \dd\lambda_t(x) \frac{D_\xi p_t(x)[h]}{p_t(x)}  -\frac{1}{4} \int_0^T \frac{h(t)}{\xi^{1/2} }\Gamma^{1/2} \phi dt   \right) \dd x\\
    &= \E \int \left(D_{p_T}u_\mathrm{final}(x,p_T(x)) - \frac{\lambda_T(x)}{p_T(x)}\right)  D_\xi p_T [h](x)\dd x\\
    &+\E \int_0^T \int D_\xi p_t[h](x) \left(D_{p_t}u_\mathrm{int}(x,p_t(x))\ddt + \cL \left(\frac{\lambda_t}{p_t} \right)(x)\ddt - \frac{\lambda_t(x)}{p_t(x)}\frac{\cL^\star p_t(x)}{p_t(x)}\ddt+ \frac{\dd \lambda_t(x)}{p_t(x)}\right)\dd x\\
    &+ \E \int_0^Th(t)\int \lambda_t(x)  \left( -\langle g(x), g(X(t))\rangle_{\Gamma(t)} + \frac{1}{2}\|g(x)\|_\Gamma(t)^2 \right)   + \frac{1}{4\xi^{1/2}(t)}\Gamma^{1/2}(t)\phi(x,\xi)\dd t \dd x\\
    &+\E \int_0^T\int \lambda_t(x)h(t)\left( - \frac{1}{2}g(x)^\top \Gamma^{-1/2} \xi(t)^{-1/2}  \dd W_t + \frac{1}{4\xi^{1/2}(t)}\Gamma^{1/2}(t)\phi(x,\xi(t))\right)
\end{align*}
The four lines in the last expression determine the following components of interest:

By setting $\lambda_T(x) = p_T(x) D_{p_T}u_\mathrm{final}(x,p_T(x))$, the first line vanishes. The second line can be dropped by posing the (backwards) adjoint differential equation
\begin{align*}
    -\partial_t\lambda_t(x) = -\lambda_t(x) \frac{(\cL^\star p_t)(x)}{p_t(x)} + \cL\left( \frac{\lambda_t(x)}{p_t(x)}\right)(x) + D_{p_t}u_\mathrm{int}(x,p_t(x))
\end{align*}
With this choice, we see that the diffusion coefficient of the dynamics of $\lambda$ is $0$, so $\phi = 0$, and the term stemming from the quadratic covariation vanishes. Since we take the expectation with respect to $W_t$,  the stochastic integral is dropped, as well (under mild integrability assumptions on the integrand, making sure the local martingale given by the It\=o integral is actually a martingale). This means that the fourth line vanishes. The third line, finally, evaluates to an expression yielding the Fr\'echet derivative of the utility functional with respect to $\xi$:

\[\mathbb{E}D_\xi \mathbf L(p) = \E \int   \lambda_t(x)  \left( - g(x)^\top \Gamma^{-1} g(X(t))    +\frac{1}{2} \| g(x)\|^2_{\Gamma^{-1}} \right) \dd x\]

 We now put in a bit more effort to simplify the first two terms in the adjoint equation, using the special structure of $\cL$ in our case where 
\begin{align*}
    \cL^\star p_t(x) &= -\nabla \cdot (p_t(x) f(x)) + \frac{1}{2} \tr[\Sigma \nabla^2 p_t(x)] \\
    \cL p_t(x)  & = f(x) \cdot \nabla p_t(x) + \frac{1}{2}\tr[\Sigma \nabla^2 p_t(x)]
\end{align*}
We can then define $\mu_t = \frac{\lambda_t}{p_t}$ and evaluate the term in the right hand side of the adjoint equation as
\begin{align*}
    - p_t(x)\cL \left(\frac{\lambda_t}{p_t} \right)(x) + \lambda_t(x)\frac{\cL^\star p_t(x)}{p_t(x)} &= -p_t \left(  f \cdot \nabla \left(\frac{\lambda_t}{p_t} \right) + \frac{1}{2}\tr\left[\Sigma \nabla^2 \left( \frac{\lambda_t}{p_t} \right) \right] \right) \\
    &+  \frac{\lambda_t}{p_t} \left( -\nabla \cdot (p_t(x) f(x)) + \frac{1}{2} \tr[\Sigma \nabla^2 p_t(x)] \right)  \\
    & = -\nabla \cdot \left( \lambda_t f \right) + \frac{1}{2} \tr \left[ \frac{\lambda_t}{p_t} \nabla^2 p_t \Sigma - p_t \nabla^2 \left( \frac{\lambda_t}{p_t} \right) \Sigma  \right] \\
    &= -\nabla \cdot \left( \lambda_t f \right)  + \frac{1}{2} \tr\left[ \Sigma \left( p \nabla^2 \mu - \mu \nabla^2 p \right) \right].
\end{align*}

We now simplify the second-order term using the identity:
\[
p \nabla^2 \mu - \mu \nabla^2 p = \nabla \cdot (p \nabla \mu - \mu \nabla p)
\]

Hence $\tr\left[ \Sigma (p \nabla^2 \mu - \mu \nabla^2 p) \right]
= \nabla \cdot \left( \Sigma (p \nabla \mu - \mu \nabla p) \right) $. 

So the adjoint equation becomes ($\mu = \lambda /p$) is 
\begin{align*}
     \partial_t \lambda_t(x) &= -p_t(x) D_{p_t}u_\mathrm{int}(x,p_t(x)) -\nabla \cdot \left(\lambda_t(x) f(x) + \frac{1}{2} \Sigma \left(p_t(x) \nabla \frac{\lambda_t(x)}{p_t(x)} - \frac{\lambda_t(x)}{p_t(x)} \nabla p_t(x)\right) \right)\\
     \intertext{then we use $p_t(x) \nabla \frac{\lambda_t(x)}{p_t(x)} = \nabla \lambda_t(x) - \lambda_t(x) \nabla \log p_t(x)$, so we get}
     &= -p_t(x) D_{p_t}u_\mathrm{int}(x,p_t(x)) -\nabla \cdot \left(\lambda_t(x) [f(x) -\Sigma \nabla \log p_t(x)] + \frac{1}{2} \Sigma \nabla \lambda_t(x) \right)
\end{align*}
\end{proof}
\subsection{Algorithm}
    We can summarise computation of the gradient of the utility functional in the setting of lemma \ref{sec:main_lemma} as follows:
    \begin{enumerate}
        \item Set a sensor schedule $\xi$ and a number of Monte Carlo iterations.
        \item For each $n=1,\ldots, N$:
        \begin{enumerate}
        \item Simulate independent signal paths $X^{(n)}(t)$ and corresponding ($\xi$-dependent) observational paths $Z^{(n)}(t)$.
        \item Solve the filtering problem by computing the filtering distribution $p^{(n)}_t$.
        \item Solve the backward adjoint equation for $\lambda_t^{(n)}$.
        \item Evaluate the integral inside the expectation in \eqref{eq:frechet_der}, call this $\eta^{(n)}$
        \end{enumerate}        
        \item Set $\eta(t) = \frac{1}{N}\sum_{n=1}^N \eta^{(n)}(t)$ as a Monte Carlo approximation of the true gradient.
    \end{enumerate}

\begin{rem}\label{rem:SGD}
    Choosing a small value of $N$ (rather than letting $N\to \infty$) leads to a stochastic gradient approximation similar to the ideas of stochastic gradient descent \cite{kiefer1952stochastic,latz2021analysis} and sample average approximation \cite{kim2014guide,kleywegt2002sample}.
\end{rem}

\section{Numerical experiments}\label{sec:numerics}
We now briefly demonstrate the feasibility of the proposed approach: We will see that in our guiding example we indeed can optimise the sensor schedule to arrive at much higher realised information gains from data, and we show the same behaviour for a linear and Gaussian example. All our examples are very low-dimensional, since solving the (log-)Zakai equation becomes unfeasible for high dimensions. We refer to section \ref{sec:conclusion} for a comment on how to make this approach more viable in realistic application settings.
\subsection{Guiding example: Logistic Growth}
\greybox{Since $X(t) = x_0$ is constant, the generator is $\cL = 0$. The observations are given by
\begin{equation*}
    \dd Z(t) = g(X(t),t)\xi(t)\dd t + \gamma\sqrt{\xi(t)}\dd W_t
\end{equation*}
for $t\in [0,T]$. The log-Zakai equation is $\dd \log p_t(x) = \frac{1}{\gamma^2}g(x,t)\dd Z(t) - \frac{1}{2\gamma^2}g(x,t)^2\xi(t)\dd t$. In this particular example, the adjoint equation is constant, and we have for all $t \in [0,T]$
\begin{align*}
    \lambda_t(x) = \lambda_T(x) = q_T(x)\left[\log \frac{q_T(x)}{q_0(x)} -\int \log \frac{q_T(y)}{q_0(y)}q_T(y)\dd y\right],
\end{align*}
with the gradient becoming 
\begin{align*}
    \delta_\xi V(t) &= \E\frac{1}{2\gamma(t)^2}\int \lambda_t(x)\ g(x,t)(g(x,t) - 2g(x_0,t)) \dd x
\end{align*}
We test this example by setting $T = 6$, an initial sensor schedule $\xi_0 = \mathrm{Unif}[0,6]$, and performing a (projected) gradient descent in $\xi$, i.e. we set $\xi_{i+1} = \Pi (\xi_i -  \delta_{\xi_i} V)$, where $\Pi$ is a projection operator making sure that the new sensor schedule is a valid probability density. When approximating the gradient, we choose just one Monte Carlo sample, see remark \ref{rem:SGD}. We just do 15 steps to demonstrate how it works (in all figures where there is a blue-to-red gradient, blue corresponds to the initial state $n=0$, and dark red is $n=15$, with the colors interpolating inbetween). The top left figure shows how the sensor schedule $\xi$ changes, over the course of 15 iterations, from a uniform density $\xi_0$ to a more peaked distribution near $t=3$. This is consistent to the frequentist result we obtained in the very beginning, where we got $\tau = \frac{\log(z_0^{-1}-1)}{x_0}\approx 3$. The top right figure shows how the observation process $Z(t)$ responds to the various sensor schedules $\xi$. Bottom left we can see how the final filtering distribution (the posterior distribution for the unknown parameter $x_0$ does in fact show a much larger contraction with respect to the prior (dashed line) if we choose $\xi_15$ (which focusses on observing close to time $t=3$) rather than $\xi_0$ (which is a uniform sensor schedule over the whole observation window).

\includegraphics[width=0.5\textwidth]{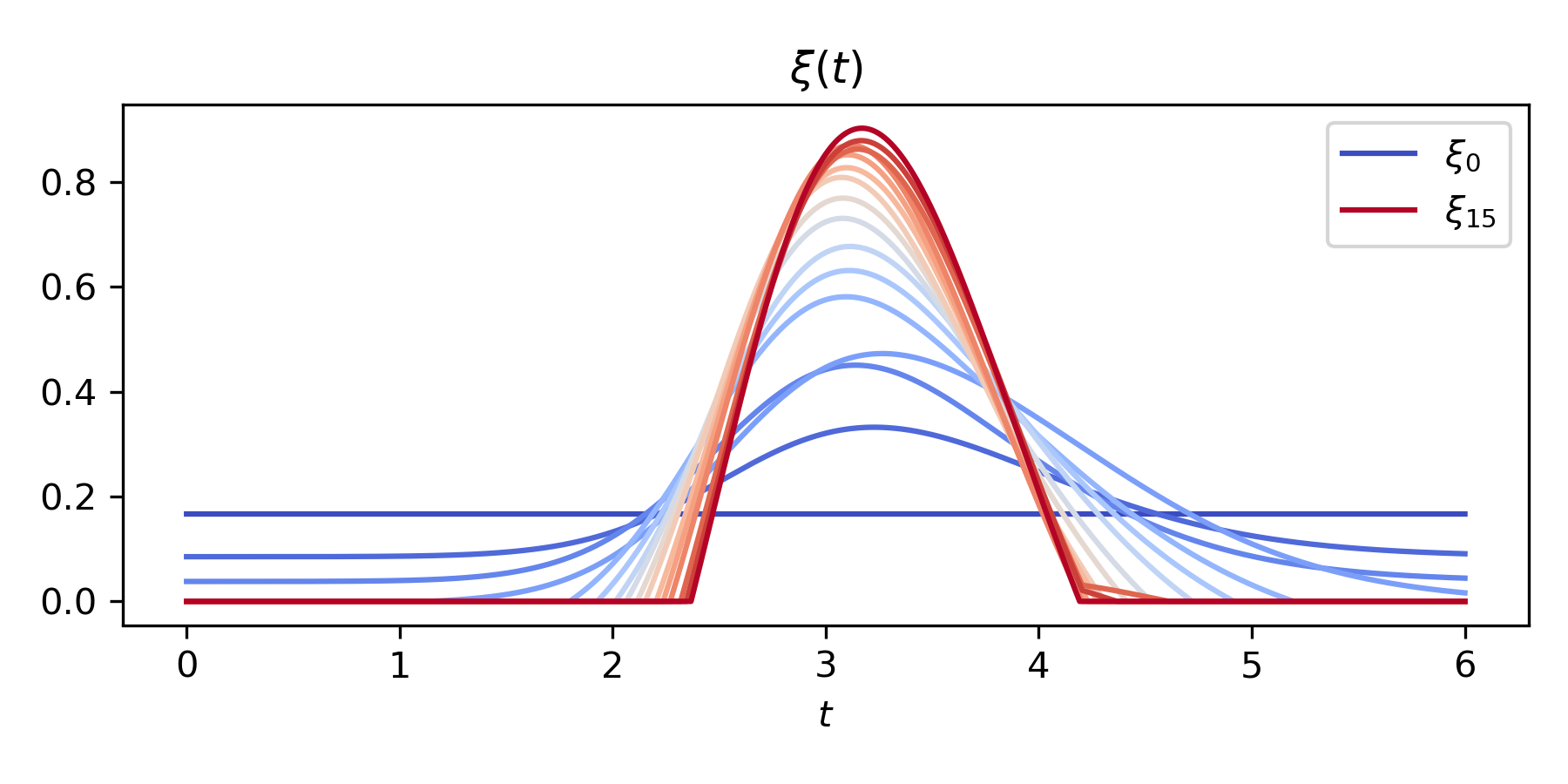}%
\includegraphics[width=0.5\textwidth]{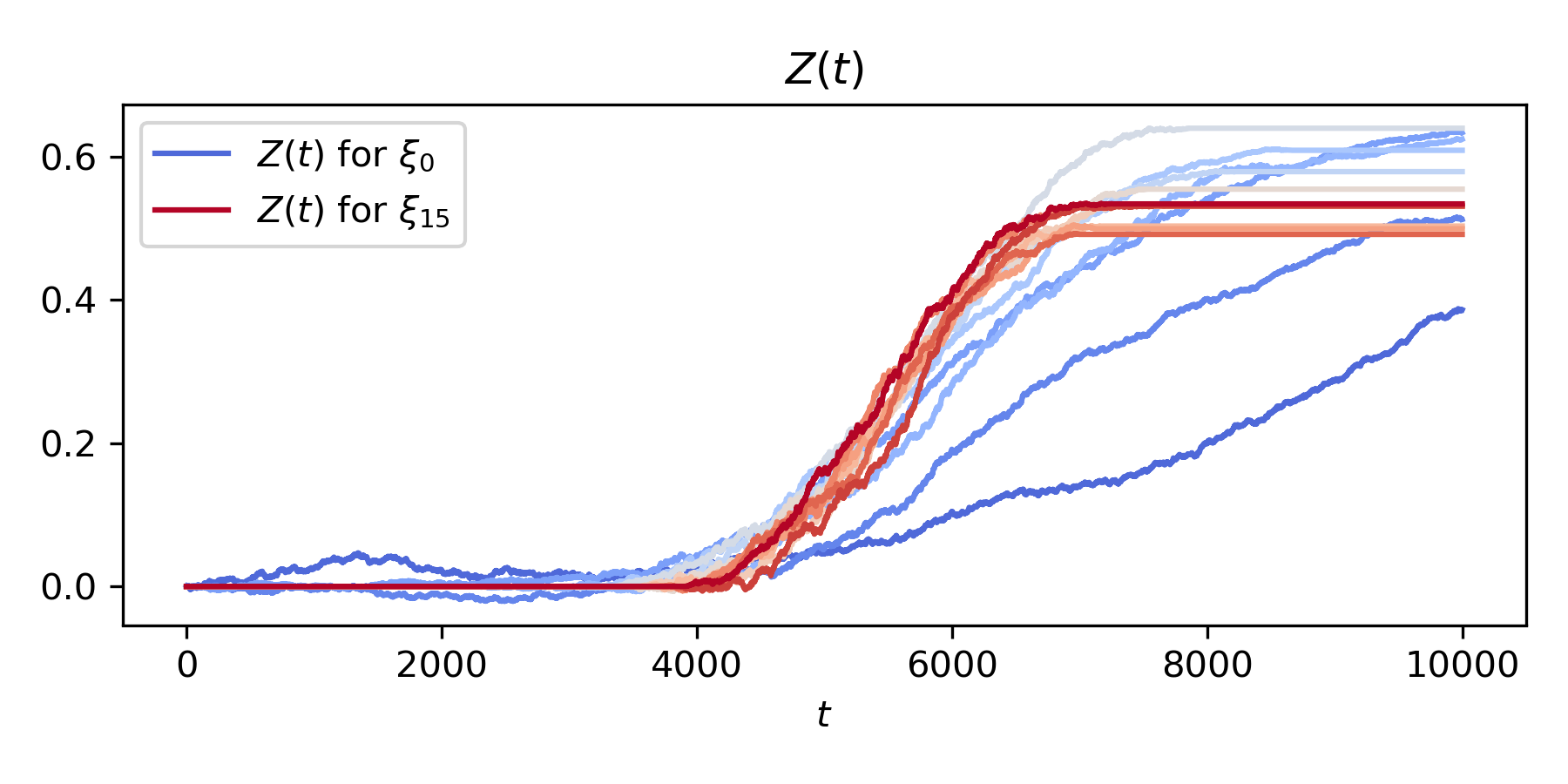}
\includegraphics[width=0.5\textwidth]{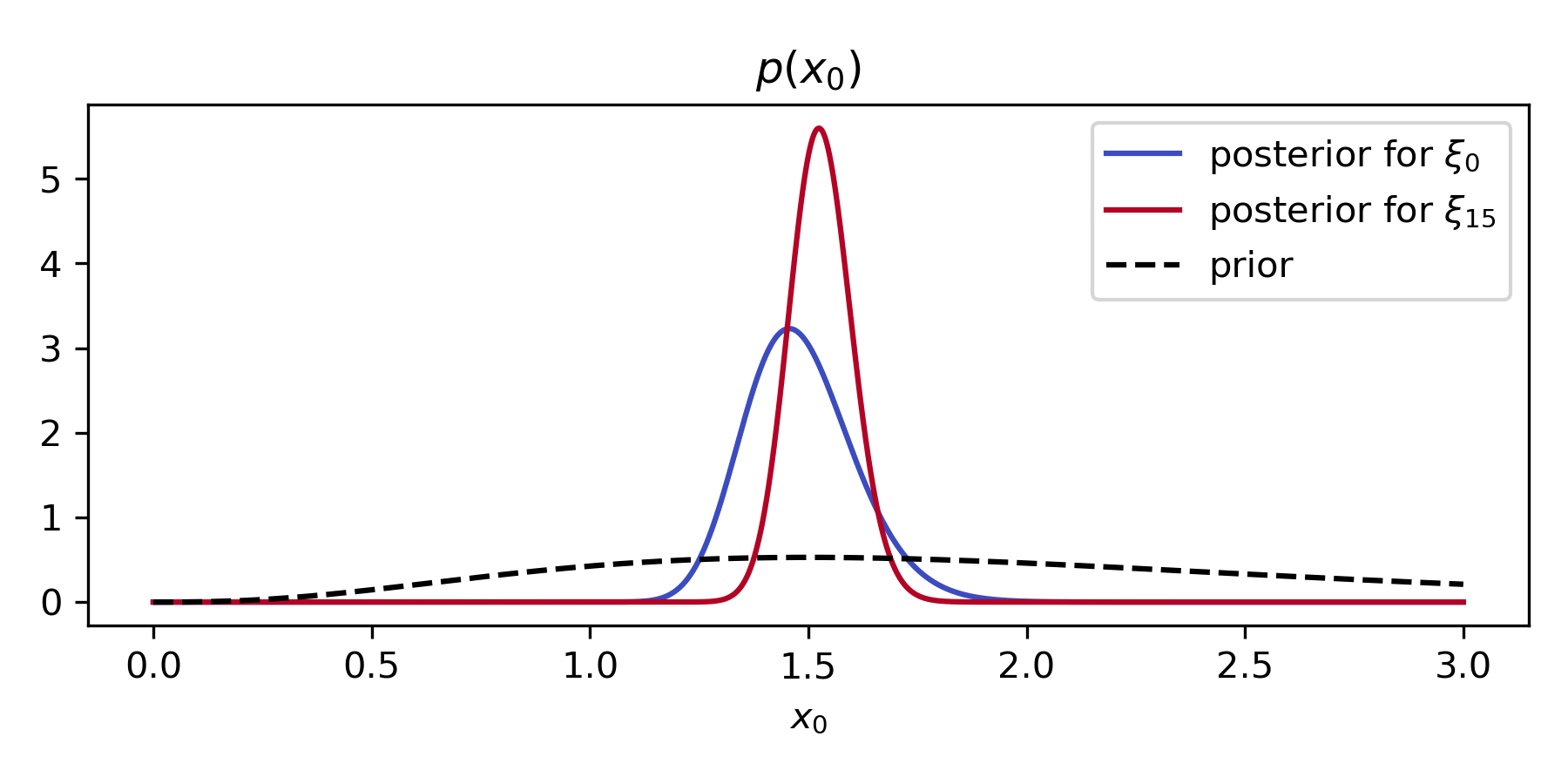}%
\includegraphics[width=0.5\textwidth]{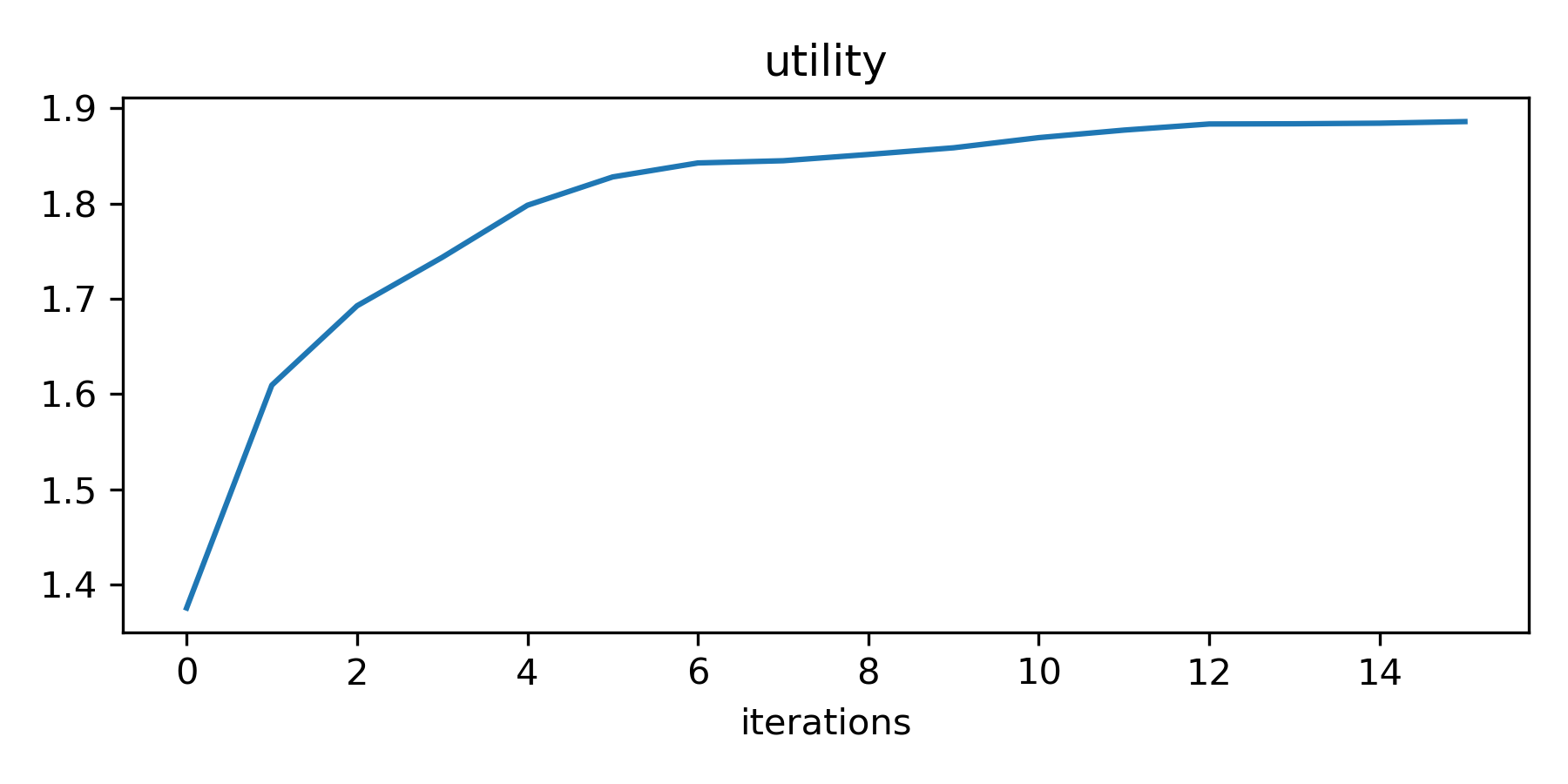}
}
\subsection{Linear Gaussian example}\label{sec:2dexample}
We consider a simple case of observing a two-dimensional Brownian motion $X(t) = (X_1(t),X_2(t))$, where the observation operator selectively picks up only one dimension at a time:
\begin{align*}
    \dd X_1(t) &= \sigma \dd W_1(t)\\
    \dd X_2(t) &= \sigma \dd W_2(t)\\
    \dd Z(t) &= H(t)\cdot X(t) \dd\xi(t) + \gamma \sqrt{\xi(t)}\dd B(t)
\end{align*}
where $t\in [0,6]$ and $H(t) = (1, 0)$ for $t < 3$ and $H(t) = (0,1)$ for $t > 3$. In contrast to the nonlinear setting\footnote{where the utility only penalised uncertainty at the final timestep.}, here we choose an utility depending on the total uncertainty integrated over the whole time horizon $U(p) = \int_0^6 \tr(C(t))\dd t$, where $C$ is the filtering covariance obtained from the Kalman-Bucy filter. Minimising this quantity means trying to have uniformly low variance in both dimensions over the whole observation time frame $t\in[0,6]$.

By optimising this functional using the techniques explained in this manuscript, we see that optimising for $\xi$ leads to a sensor schedule $\xi$ which is strongly concentrated just after $t=0$ and just after $t=3$. Intuitively, an optimal sensor schedule has short bursts of high-SNR observation as early as possible at points in time where information about the first (and second, respectively) dimension becomes available, which keeps down total integrated filtering covariance trace.
\begin{figure}
    \centering
    \begin{subfigure}[t]{0.45\textwidth}
    \includegraphics[width=\linewidth]{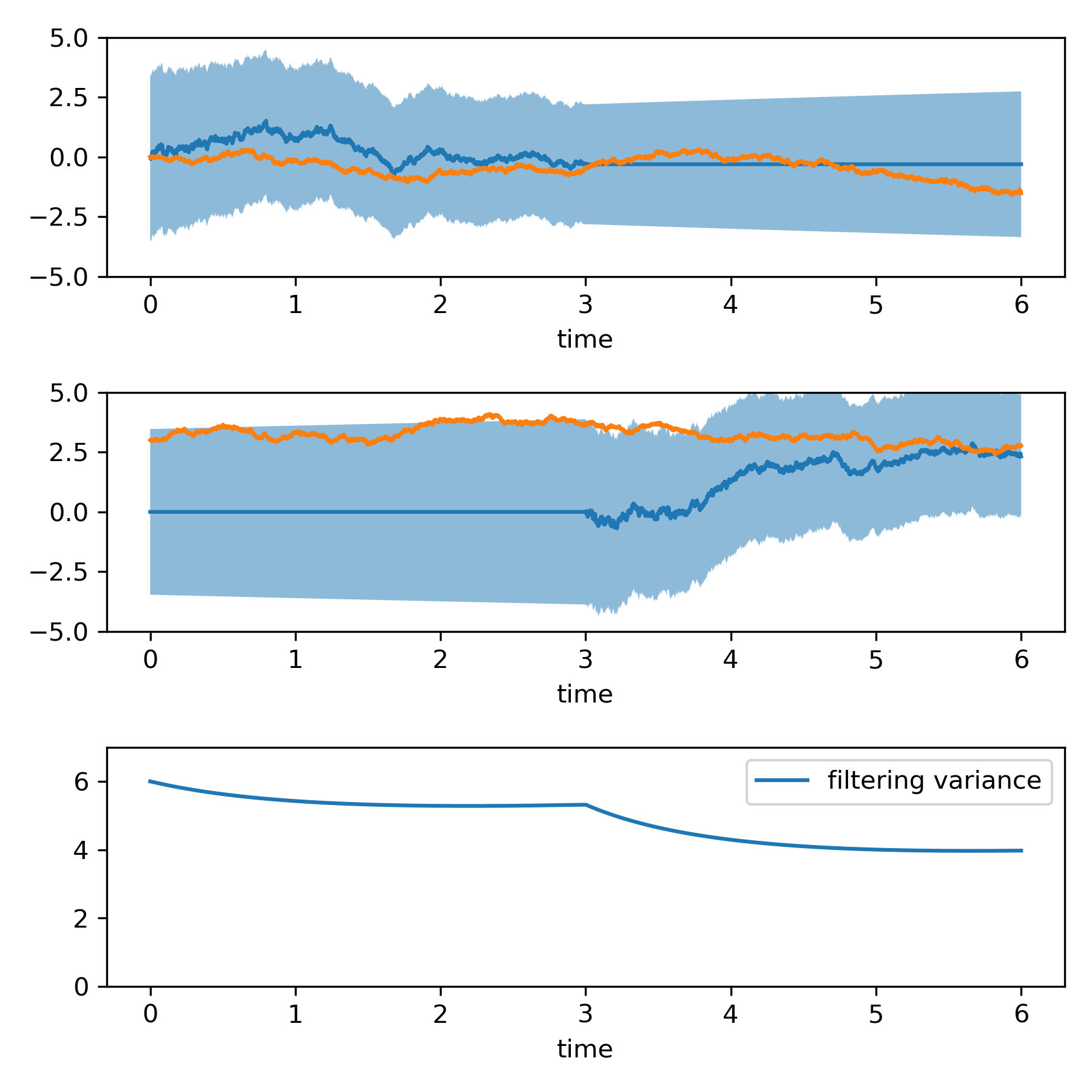}%
    \caption{For $\xi = \mathrm{Unif}[0,6]$: True path (orange) and reconstructed mean $\pm$  two standard deviations from filtering distribution, as well as $\tr(C(t))$ over time.}
    \end{subfigure}%
    \hfill
    \begin{subfigure}[t]{0.45\textwidth}
    \includegraphics[width=\linewidth]{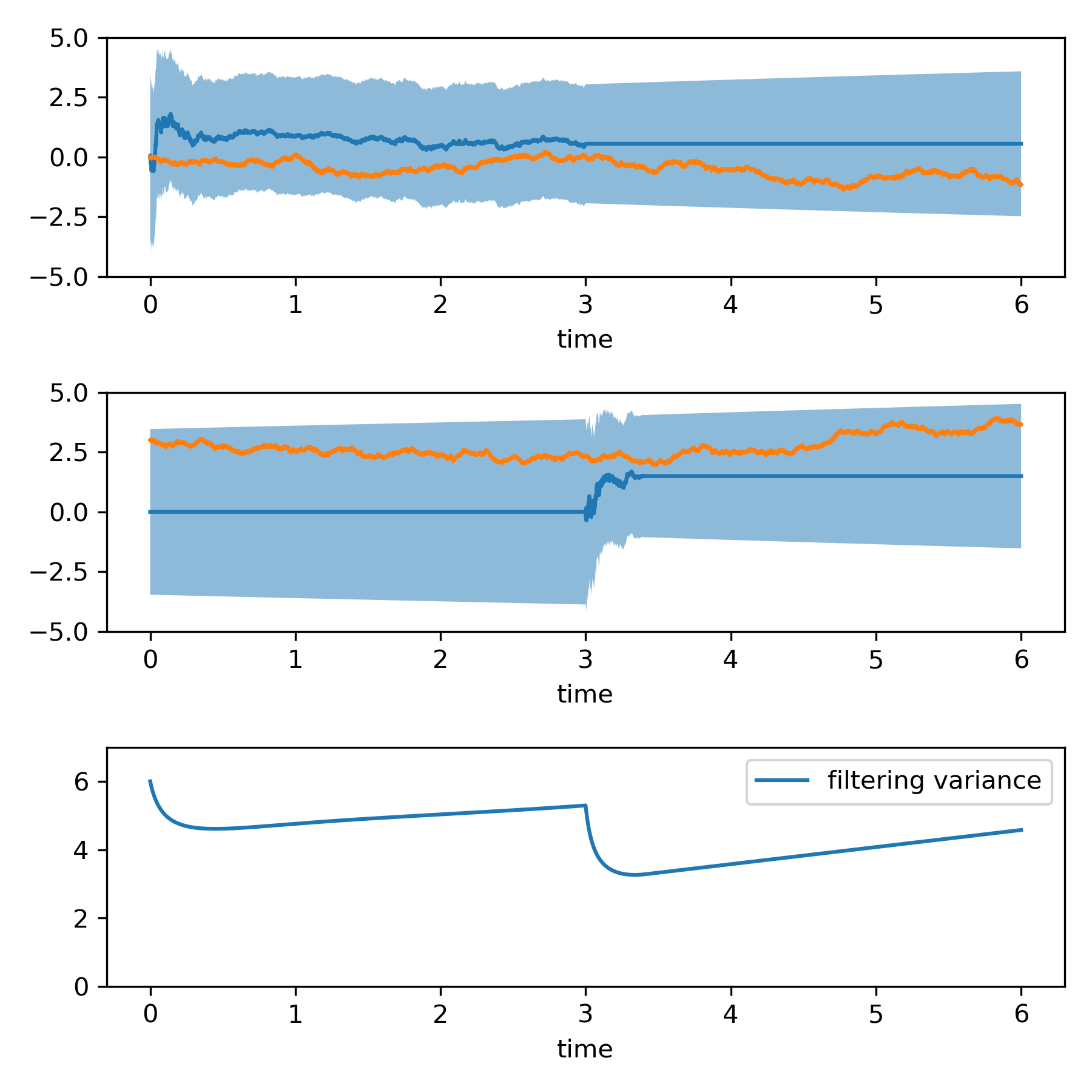}%
    \caption{For $\xi$ obtained from optimisation procedure: True path (orange) and reconstructed mean $\pm$ two standard deviations from filtering distribution, as well as $\tr(C(t))$ over time.}
    \end{subfigure}\\    
    \begin{subfigure}[t]{0.45\textwidth}
    \includegraphics[width=\linewidth]{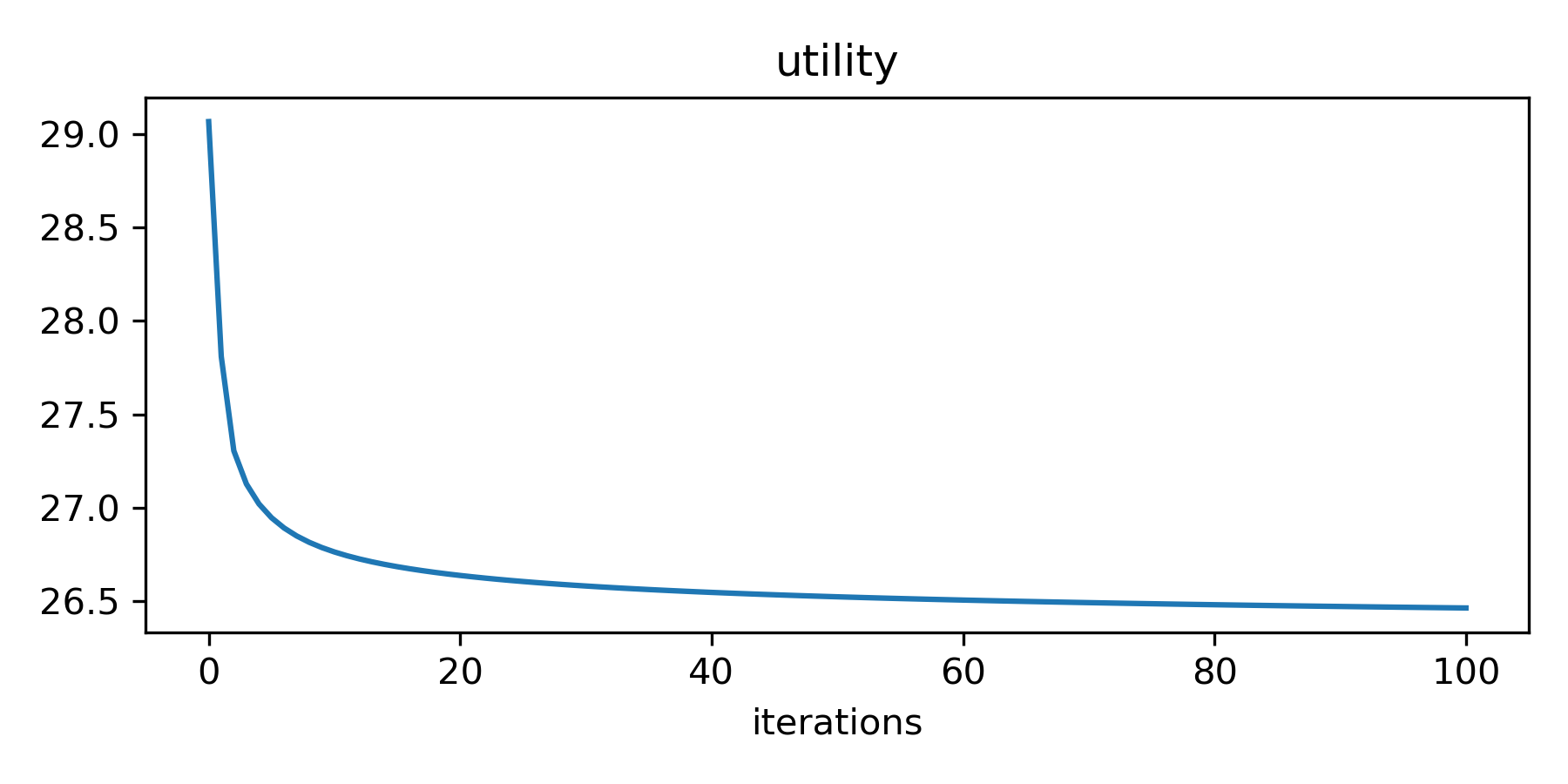}%
    \caption{Value of utility, over the course of 100 minimisation iterations. The fact that this decreases can also be observed visually from comparing the two plots of the (trace of the) filtering variance, seeing that the  area under the first curve is larger than under the second curve. Note that we minimise the utility here, since we are looking for minimal variance.}
    \end{subfigure}%
    \hfill
    \begin{subfigure}[t]{0.45\textwidth}
    \includegraphics[width=\linewidth]{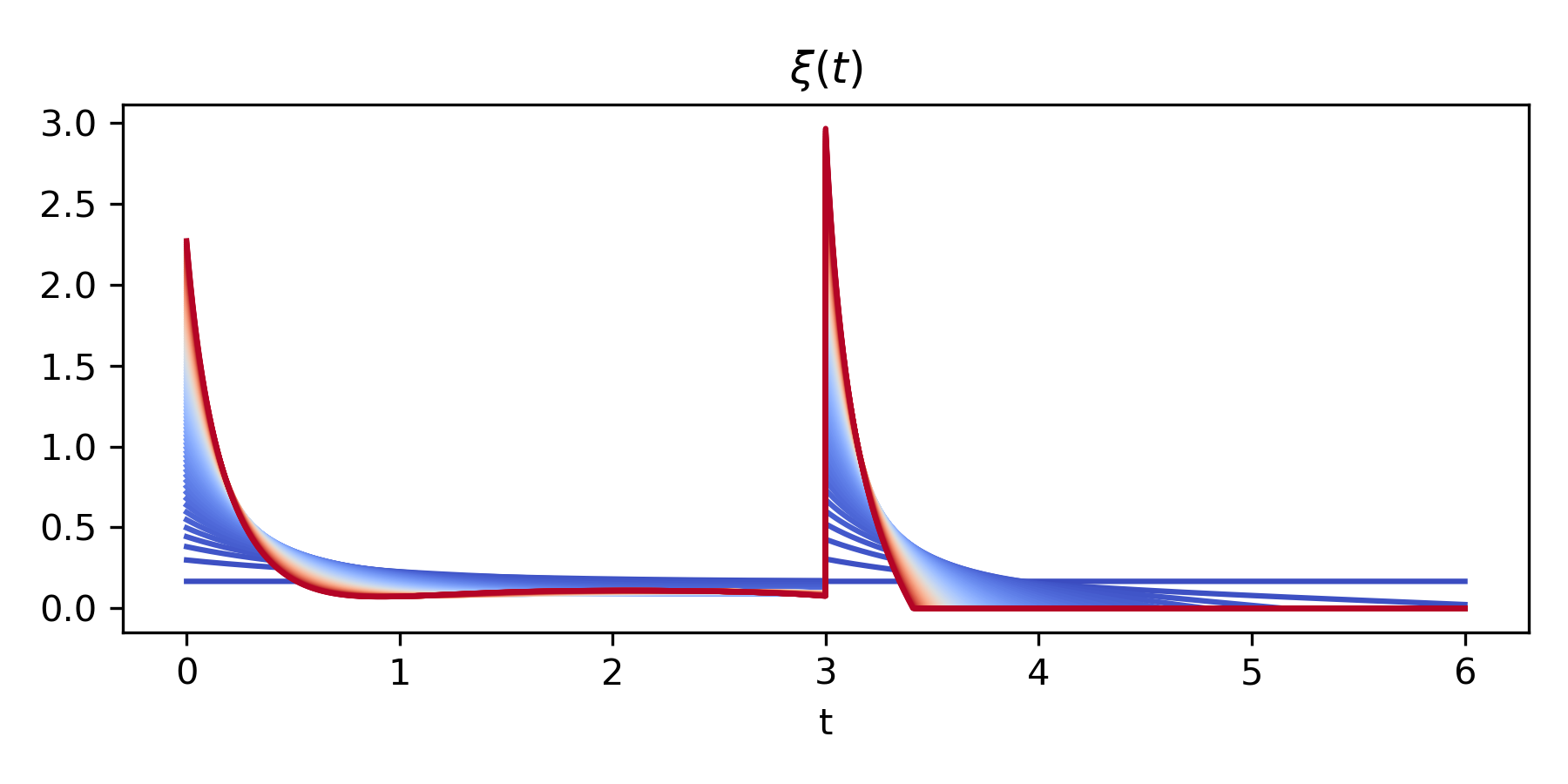}%
    \caption{Evolution of $\xi$ over the course of 100 minimisation iterations: Starts with $\xi_0 = \mathrm{Unif}[0,6]$ (dark blue), and becomes sharply peaked around times $0$ and $3$.}
    \end{subfigure}
    \caption{Visualisation of results of section \ref{sec:2dexample}}
    \label{fig:Simulation_2d_linear}
\end{figure}

\section{Conclusion}\label{sec:conclusion}
We have shown how the problem of optimal sensor placement can be applied to continuous stochastic filtering, and how the gradient of an utility functional can be recovered using the adjoint method. We demonstrated in a few simple cases that this allows us to find provably optimal experimental designs. There are several avenues for future work, e.g.,
\begin{itemize}
    \item Rather than performing a simple projected gradient descent (which ignores the fact that the utility function is an expectation of a random variable), optimisation algorithms more adapted to this stochastic optimisation setup can be explored, such as the SPSA algorithm \cite{spall1987stochastic}.
    \item Embedding probability distributions in a manifold structure allows development of gradient flow algorithms based on Wasserstein or Rao-Fisher flow. The former would allow application of particle methods.
    \item In practice, the Zakai (or Kushner-Stratonovich equation) is rarely solved computationally, instead, tools like the particle filter are used. Extending the theory in the manuscript to developing an adjoint expression for the gradient of a utility functional depending on a particle filter instead of the Zakai equation would be a very interesting avenue for future research.
    \item We started with the problem of finding a single (or several finitely many) observation points, and then relaxed this setting to smooth sensor schedules $\xi$ in the form of a probability distribution. At this point it is unclear how optimality of the smooth density translates into optimality of discrete observation points (e.g., obtained via sampling from this $\xi$).
    \item We focussed on finding the optimal sensor distribution in advance, but of course incoming data will continuously provide more information, so \textit{adaptive} sensor placement on the fly has the potential to improve on this (with the limitation that computing time has to keep up with the speed of incoming data).
    \item A large class of interesting sensor placement problems are built on spatial domains, with sensors being placed at spatial locations (rather than ordered points in time). Generalising the current theory to the spatial setting would be very interesting.
\end{itemize}

\begin{appendix}
    \section{Useful lemmata and statements}
\begin{lem}
    For compatible matrices $A,B,C$,
    \begin{equation}
        \langle A,BC\rangle = \langle AC^\top, B\rangle = \langle B^\top A, C\rangle
    \end{equation}
    \begin{proof}
        Follows from definition $\langle U, V\rangle = \tr(U^\top V)$, cyclic property of the trace, and invariance of the trace under transposition:
        \begin{align*}
            \langle A, BC\rangle &= \tr(A^\top BC) = \tr(CA^\top B) = \tr((AC^\top)^\top B) = \langle AC^\top, B\rangle
        \end{align*}
        and
        \begin{align*}
            \langle A, BC\rangle &= \tr(A^\top BC) = \tr(C^\top B^\top A) = \langle C, B^\top A\rangle = \langle B^\top A, C\rangle
        \end{align*}
    \end{proof}
\end{lem}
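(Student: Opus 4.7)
The statement is an elementary identity for the Frobenius inner product on matrices, so the plan is simply to unfold the definition and manipulate traces. I would begin by recalling that $\langle U, V\rangle = \tr(U^\top V)$ (this is the inner product $\llangle\cdot,\cdot\rrangle$ on $\cM$ fixed in the notation section). The only two structural facts I need are the cyclic property $\tr(XYZ) = \tr(ZXY) = \tr(YZX)$ and the transpose invariance $\tr(M) = \tr(M^\top)$, both of which are standard for compatible matrix products.

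For the first equality, I would write $\langle A, BC\rangle = \tr(A^\top B C)$, cycle to obtain $\tr(C A^\top B)$, and then recognise $C A^\top = (A C^\top)^\top$, giving $\tr\!\bigl((AC^\top)^\top B\bigr) = \langle AC^\top, B\rangle$. For the second equality, the cleanest route is again from $\tr(A^\top B C)$: applying the transpose-invariance trick directly, $\tr(A^\top B C) = \tr\!\bigl((A^\top B C)^\top\bigr) = \tr(C^\top B^\top A)$, which is $\langle C, B^\top A\rangle = \langle B^\top A, C\rangle$ by symmetry of the Frobenius inner product.

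There is essentially no obstacle here; the only thing to be careful about is bookkeeping of which matrices are compatible in which products (so that each trace makes sense and each appearance of a transpose lands on the correct factor). The lemma does not require any positivity, invertibility, or square shape beyond what is needed for the products $A^\top BC$, $AC^\top$, and $B^\top A$ to be defined, and I would state this compatibility assumption implicitly as in the lemma. No further machinery from the paper (filtering, adjoint PDEs, etc.) is needed.
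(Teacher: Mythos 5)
Your proposal is correct and follows exactly the same route as the paper's proof: unfold the Frobenius inner product as $\tr(U^\top V)$, use cyclicity to get $\tr(CA^\top B)=\tr((AC^\top)^\top B)$ for the first equality, and transpose invariance to get $\tr(C^\top B^\top A)$ for the second. Nothing to add.
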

\begin{lem}[Summation by parts]
    \begin{align*}
        \sum_{i=1}^N a_i (b_i - b_{i-1}) &= \sum_{i=1}^N a_i b_i - \sum_{i=0}^{N-1} a_{i+1}b_i = a_Nb_N + \sum_{i=1}^{N-1}(a_i - a_{i+1})b_i- a_1b_0\\
        &= a_Nb_N-a_1b_0 - \sum_{i=1}^{N-1}(a_{i+1}-a_i)b_i\\
        &= a_Nb_N-a_1b_0 - \sum_{i=2}^{N}(a_{i}-a_{i-1})b_{i-1}
    \end{align*}
    and
    \begin{align*}
        \sum_{i=0}^{N-1} a_i(b_{i+1}-b_i) = a_{N-1}b_N - a_0b_0 - \sum_{i=1}^{N-1}(a_i-a_{i-1})b_i
    \end{align*}
\end{lem}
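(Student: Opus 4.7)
The plan is to prove both identities by the standard two moves: split the telescoping difference into two separate sums, and then reindex one of them so that the summation indices align, allowing a clean regrouping. I expect no real obstacle here, since the lemma is purely algebraic manipulation of finite sums; the only thing to be careful about is the bookkeeping of boundary terms at $i=0$, $i=1$, $i=N-1$, and $i=N$.

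For the first identity, I would start from
\[
    \sum_{i=1}^N a_i(b_i - b_{i-1}) = \sum_{i=1}^N a_i b_i - \sum_{i=1}^N a_i b_{i-1},
\]
and in the second sum shift the index via $j = i-1$, which turns it into $\sum_{j=0}^{N-1} a_{j+1} b_j$. Relabeling $j\mapsto i$ yields the first displayed equality. Next, I would peel off the $i=N$ term of $\sum_{i=1}^N a_i b_i$ (giving $a_N b_N$) and the $i=0$ term of $\sum_{i=0}^{N-1} a_{i+1} b_i$ (giving $a_1 b_0$); the remaining indices $i=1,\dots,N-1$ then combine into $\sum_{i=1}^{N-1}(a_i - a_{i+1})b_i$, which is the second displayed expression. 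Pulling the sign through gives the third form $a_N b_N - a_1 b_0 - \sum_{i=1}^{N-1}(a_{i+1}-a_i)b_i$, and finally a shift $i\mapsto i-1$ in this last sum (so that $i$ runs from $2$ to $N$ and $b_i$ becomes $b_{i-1}$) produces the fourth line.

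For the second identity, I would apply exactly the same strategy to $\sum_{i=0}^{N-1} a_i(b_{i+1} - b_i)$: split into $\sum_{i=0}^{N-1} a_i b_{i+1} - \sum_{i=0}^{N-1} a_i b_i$, then shift $j=i+1$ in the first sum to obtain $\sum_{j=1}^{N} a_{j-1} b_j$. Peeling off the boundary terms $a_{N-1} b_N$ and $-a_0 b_0$, the middle indices $i=1,\dots,N-1$ combine to give $-\sum_{i=1}^{N-1}(a_i - a_{i-1}) b_i$, which matches the stated formula.

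The main (and really only) point to check is that the boundary contributions have been accounted for consistently when shifting indices; this is routine and amounts to verifying that the extracted terms plus the middle sum reproduce the original telescoping sum. No deeper machinery is needed.
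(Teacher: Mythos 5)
Your proposal is correct, and it is essentially the same argument the paper relies on: the lemma's displayed chain of equalities is itself the split-and-reindex computation, and you have simply made the index shifts and boundary-term bookkeeping explicit. No gaps.
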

\begin{lem}[Derivative of inverse matrix]
    $\dd M^{-1} = -M^{-1}\cdot \dd M \cdot M^{-1}$
\end{lem}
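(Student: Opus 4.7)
\begin{proof}[Proof sketch]
The plan is to exploit the defining identity of the matrix inverse, namely $M \cdot M^{-1} = I$, and differentiate it. Since the right-hand side is constant, its differential vanishes, which gives a linear relation that can be solved for $\dd M^{-1}$.

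Concretely, I would first apply the product rule (valid here because matrix multiplication is bilinear and both factors depend on the underlying parameter) to obtain
\begin{equation*}
\dd M \cdot M^{-1} + M \cdot \dd M^{-1} = 0.
\end{equation*}
Next, I would left-multiply by $M^{-1}$ on both sides to isolate $\dd M^{-1}$, yielding
\begin{equation*}
\dd M^{-1} = -M^{-1} \cdot \dd M \cdot M^{-1},
\end{equation*}
which is the claim.

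The only subtlety worth mentioning is justifying that $M^{-1}$ is itself differentiable (so that the product rule applies), which follows from the fact that matrix inversion is a smooth map on the open set of invertible matrices, together with the chain rule. Beyond this mild regularity remark, there is no real obstacle: the entire argument is a one-line differentiation of $M M^{-1} = I$ followed by left-multiplication by $M^{-1}$.
\end{proof}
\end{appendix}
\end{document}
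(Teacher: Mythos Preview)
Your proof is correct and is precisely the standard argument for this identity. The paper actually states this lemma without proof, so there is nothing to compare against; your differentiation of $M M^{-1} = I$ followed by left-multiplication by $M^{-1}$ is exactly what one would expect.
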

\begin{lem}[Matrix derivatives]\label{lem:matrix_derivatives}
    \begin{align*}
        \partial_M \tr(M) &= I\\
        \partial_M \log\det(M) &= 2M^{-1} - M^{-1}\circ I.
    \end{align*}
\end{lem}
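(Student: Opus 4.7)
The first identity is essentially by inspection. Writing $\tr(M) = \sum_i M_{ii}$, we immediately get $\partial_{M_{ij}} \tr(M) = \delta_{ij}$, so that assembled into a matrix this is the identity $I$. I would dispose of this in one line.

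The interesting part is the $\log\det$ identity, whose unusual shape $2M^{-1} - M^{-1}\circ I$ (rather than the textbook $M^{-\top}$) comes from the fact that $M$ appears in this manuscript as a covariance matrix, so it must be differentiated subject to the symmetry constraint $M = M^\top$. My plan is to start from Jacobi's formula $d\log\det(M) = \tr(M^{-1}\, dM)$, valid for a general invertible matrix, which yields the standard pointwise partials $\partial_{M_{ij}}\log\det(M) = (M^{-1})_{ji}$ when all $n^2$ entries are treated as free. Then I would impose symmetry by parameterising $M$ by its independent entries $\{M_{ij} : i \le j\}$ and apply the chain rule.

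For an off-diagonal independent parameter $M_{ij}$ with $i < j$, the variable appears twice in the matrix, once at position $(i,j)$ and once at $(j,i)$, so the chain rule gives
\[
\frac{\partial \log\det(M)}{\partial M_{ij}} = (M^{-1})_{ji} + (M^{-1})_{ij} = 2(M^{-1})_{ij},
\]
where the last equality uses symmetry of $M^{-1}$. For a diagonal parameter $M_{ii}$, the variable appears only once, so
\[
\frac{\partial \log\det(M)}{\partial M_{ii}} = (M^{-1})_{ii}.
\]
Reassembling into a matrix produces an object that is $2M^{-1}$ off the diagonal and $M^{-1}$ on the diagonal, which is precisely $2M^{-1} - M^{-1}\circ I$ because the Hadamard product $M^{-1}\circ I$ zeros out the off-diagonal entries and preserves the diagonal, so subtracting it removes exactly one copy of the diagonal of $2M^{-1}$.

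The main conceptual obstacle, and the only point worth stating carefully, is precisely this interpretation: the $2$ and the Hadamard correction are not typos but a consequence of differentiating along the symmetric submanifold of $\cM^+$ rather than along all of $\R^{n\times n}$. I would include a short remark emphasising that if one were instead to differentiate with $M$ regarded as fully free in $\R^{n\times n}$, the answer would collapse to the familiar $M^{-\top}$; it is precisely the covariance-matrix context of Lemma \ref{lem:grad_lin_gauss} that forces the symmetric version used here. No additional machinery is needed beyond Jacobi's formula and careful bookkeeping with the chain rule.
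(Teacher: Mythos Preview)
The paper states this lemma without proof, so there is nothing to compare against; your argument is correct and supplies exactly the justification the paper omits. Your key observation---that the form $2M^{-1} - M^{-1}\circ I$ arises because $M$ is differentiated along the symmetric submanifold $\cM^+$ rather than freely in $\R^{n\times n}$---is the right explanation, and the chain-rule bookkeeping (off-diagonal entries appearing twice, diagonal entries once) is carried out cleanly.
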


\section{Recap of the adjoint method for computing the gradient of a utility function depending on a trajectory}
\label{sec:recap_adjoint}
We consider a function $F(x) = f(x(T))$, where $x$ is the solution of some ODE depending on some parameter $p$, i.e. $h(x,\dot x, p) = 0$ for all $t\in[0,T]$, and $x(0)=x_0$ (we assume no dependence of $x(0)$ on $p$).

The corresponding Lagrangian is 
\begin{align}
    \mathbf L(x,\dot x, p) = f(x(T)) + \int_0^T  \lambda(t)^\top h(x(t),\dot x(t), p)\dd t + \mu^\top (x(0)-x_0)
\end{align}
and we can compute its variation with respect to $p$ as
\begin{align}
    \DD_p\mathbf L(x,\dot x, p) &= \partial_x f(x(T)) \DD_p x(T) + \int_0^T \lambda^\top \left(\partial_x h \DD_px + \partial_{\dot x} \DD_p\dot x + \partial_p h \right)\dd t\\
    &= \partial_x f(x(T)) \DD_px(T) + \int_0^T \lambda^\top \partial_x h \DD_px \dd t+\int_0^T\lambda^\top \partial_{\dot x} h \DD_p\dot x \dd t +\int_0^T \lambda^\top\partial_p h \dd t\\
    &= \partial_x f(x(T)) \DD_p x(T) + \int_0^T \lambda^\top \partial_x h \DD_px \dd t ~+\\
    &+[\lambda^\top(t) \partial_{\dot x}h\DD_p  x(t)]_0^T - \int_0^T(\dot \lambda^\top \partial_{\dot x}h + \lambda^\top d_t \partial_{\dot x} h) \DD_px\dd t+\int_0^T \lambda^\top\partial_p h \dd t\\
    &=(\partial_x f(x(T)) + \lambda(T)^\top \partial_{\dot x}h)\DD_px(T) - \lambda(0)^\top \partial_{\dot x}h \DD_px(0)\\
    &+ \int_0^T \left[\lambda^\top \left(\partial_x h - d_t\partial_{\dot x}h \right) - \dot \lambda^\top \partial_{\dot x}h \right] \DD_px + \lambda^\top \partial_p h \dd t
\end{align}
We now require $\lambda$ to be such that the integrand vanishes. This leads to the adjoint ODE
\begin{align*}
    \dot \lambda(t)^\top \partial_{\dot x}h(x(t),\dot x(t),p(t)) &= \lambda(t)^\top(\partial_x h(x(t),\dot x(t),p(t)) - d_t \partial_{\dot x}h(x(t),\dot x(t),p(t)))\\
    \lambda(T)^\top \partial_{\dot x}h(x(T),\dot x(T),p(T)) &= -\partial_x f(x(T))
\end{align*}
so that the gradient is given by
\begin{align*}
    \DD_pF(x) = \int_0^T \lambda(t)^\top \partial_p h(x(t),\dot x(t), p(t))\dd t
\end{align*}
This means that we can evaluate the Fr\'echet derivative of $F$ with respect to $p$ by solving a backwards-in-time ``adjoint ODE'' for a Lagrangian multiplier function $\lambda$, and then evaluation certain integrals depending on $\lambda$.
\section{Time rescaling does not yield a scheduled filtering problem}\label{sec:rescale}
In this section we quickly study the question whether the scheduled filtering problem \eqref{eq:filtering_cts} can be viewed as a simple time rescaling of a standard observation process (and we answer this to the negative). Given a so-called base process, 
\begin{align*}
    \dd Y(s) = g(Y(s))\dd s + \sigma \dd B(s)  
\end{align*}
construct a time changed process $Z$ such that
\begin{align*}
    Z(t) = Y(r(t))
\end{align*}
where $r(t)$ may be either a deterministic or stochastic time change.  In our context, taking $r(t)$ to be a deterministic time change makes the most sense, i.e. $r(t)$ such that $\frac{dr(t)}{dt} = \xi(t)$ with $\xi$ a deterministic function of time.  Then 
\begin{align*}
    \dd Z(t) = \dd Y(r(t)) &= g(Y(r(t)))\dd r(t) + \sigma \dd B(r(t)) \\
    & = g(Z(t))\xi(t)\dd t + \sigma \sqrt{\xi(t)}\dd W(t) 
\end{align*}
where $W(t)$ is a standard Brownian motion in time scale $t$ independent from $B(t)$. In our case, we instead want to consider a time change of the observation process, which itself depends on another stochastic process $X$, i.e.
\begin{align*}
    \dd Y(s) = g( X(s))\dd s + \sigma \dd B(s)
\end{align*}
doing a deterministic time change in a similar way yields
\begin{align*}
    \dd Z(t) &= \dd Y(r(t)) = g(X(r(t)))\dd r(t) + \sigma \dd B(r(t)) \\
    & = g(X(r(t))) \xi(t) \dd t +  \sigma \sqrt{\xi(t)} \dd W(t) 
\end{align*}
which is not consistent with \eqref{eq:filtering_cts} as the signal process is evaluated at time $r(t)$ rather than $t$. 
\end{appendix}

\bibliographystyle{alpha}
\bibliography{lit.bib}

\end{document}